\DeclareMathOperator*{\argmin}{arg\,min}
\DeclareMathOperator*{\argmax}{arg\,max}
\newtheorem{theorem}{Theorem}
\newcommand{\blind}{0}
\begin{document}

\def\spacingset#1{\renewcommand{\baselinestretch}%
{#1}\small\normalsize} \spacingset{1}


\if0\blind
{
  \title{\bf An Information Geometry Approach to Robustness Analysis for the Uncertainty Quantification of Computer Codes}
  \author{Clement GAUCHY$^{1}$\footnote{Present affiliation: DES/ISAS - Service d'études mécaniques et thermiques (SEMT), CEA Saclay, 91191 Gif-sur-Yvette, France} \\
    Jerome STENGER$^{12}$\\
    Roman SUEUR$^{1}$\\
    Bertrand IOOSS$^{12}$\footnote{Corresponding author: bertrand.iooss@edf.fr}\\
    \\
    $^1$EDF R\&D, D\'epartement PRISME, 6 Quai Watier, 78401, Chatou, France\\
    $^2$Institut de Math\'ematiques de Toulouse, 31062 Toulouse, France}
  \maketitle
} \fi

\if1\blind
{
  \bigskip
  \bigskip
  \bigskip
  \begin{center}
    {\LARGE\bf An Information Geometry Approach to Robustness Analysis for the Uncertainty Quantification of Computer Codes}
\end{center}
  \medskip
} \fi

\bigskip
\begin{abstract}



   Robustness analysis is an emerging field in the uncertainty quantification domain. It involves analyzing the response of a computer model---which has inputs whose exact values are unknown---to the perturbation of one or several of its input distributions. Practical robustness analysis methods therefore require a coherent methodology for perturbing distributions; we present here one such rigorous method,  based on the Fisher distance on manifolds of probability distributions. Further, we provide a numerical method to calculate perturbed densities in practice which comes from Lagrangian mechanics and involves solving a system of ordinary differential equations. The method introduced for perturbations is then used to compute quantile-related robustness indices. We illustrate these  ``perturbed-law based'' indices on several numerical models.
   We also apply our methods to an industrial setting: the simulation of a loss of coolant accident in a nuclear reactor, where several dozen of the model's physical parameters are not known exactly, and where  limited knowledge on their distributions is available.

\end{abstract}

\noindent%
{\it Keywords:} Computer experiments, Density perturbation, Fisher metric, Importance sampling, Quantile, Sensitivity analysis
\vfill

\newpage

\spacingset{2} 

\section{Introduction}
\label{sec:intro}

Over the last few decades, two major trends in industrial and research practices have led to a rise in the importance of uncertainty quantification (UQ) methods \citep{derdev08,smi14,ghahig17}. The first is the replacement of full-scale physical experiments, considered costly and difficult to implement, by numerical models. This choice raises the issue of a potential mismatch between computer codes and the physical reality they aim to simulate. The second trend involves accounting for risk in an increasing number of industrial activities, and in particular, doing it quantitatively.

In both situations, UQ can be performed by considering the imperfectly known inputs of a computer code (given as a function $G(\cdot)$) as a vector of 
random variables $\textbf{X} = (X_1,\ldots,X_d)$. 
The most widespread approach consists of running $G(\cdot)$ with different combinations of inputs in accordance with their range of plausible values, in order to study the associated uncertainty found in the output $Y=G(X_1,\ldots,X_d)$, or, in other settings, to estimate a specific quantity of interest (QoI). The latter means some statistical quantity derived from $Y$, e.g,. the performance, via the mean of $Y$, or some risk criterion in terms of a high-level quantile.

For instance, the nuclear industry faces major issues as facilities age and regulatory authorities' requirements toughen \citep{bucpet10,mouwil17}. 
One precise case is where operators have to study \emph{loss of coolant accidents} (LOCA), which result in a break in the primary loop of pressurized water nuclear reactors.
This scenario can be simulated using system thermal-hydraulic computer codes which involve dozens of physical parameters including such things as condensation and heat transfer coefficients \citep{mazvac16,sansan18}. 
However, the values of many such parameters are known with only limited precision \citep{lar19} as they are typically calculated by way of other quantities measured via small-scale physical experiments.
Certain other variables can only be measured during periodic inspections, e.g., the characteristics of pumps in hydraulic systems.

Various methods from the UQ domain can be useful when dealing with such uncertainties in system safety analyses. 
First, certain methods aim to improve the exploration of the input domain $\mathcal{X}$ by using design of experiments, e.g., space filling designs \citep{fanli06}.
These make it possible to cover an input domain as evenly as possible for a fixed number of code runs, and to limit unexplored areas.
For the estimation of certain specific QoI such as the probability of threshold exceedance or an $\alpha$-order quantile of the output, Monte Carlo-type methods are often preferred.
In particular, accelerated Monte Carlo methods (e.g., importance sampling or subset simulation) target the most informative areas of $\mathcal{X}$ in the sampling algorithm in order to estimate the QoI while controlling its estimation error \citep{morbal16}.
As a preliminary or concomitant stage, global sensitivity analysis is also essential in order to eliminate non-influential parameters and rank influential ones according to their impact on the QoI \citep{ioolem15,ioomar19}. 


All of these approaches are useful for dealing with the existence of uncertainty in applied problems. 
However, industrial (e.g., nuclear facility) operators face the difficulty of justifying their risk assessment methods over and above providing simulation results. 
Risk assessment methods must demonstrate that they provide reliable safety margins. Doing so means a probable overestimation of the likely risk after accounting for all known sources of uncertainty.
This principle of conservatism, which can be easily implemented when dealing with very simple monotonic physical models, can be hard in practice to adapt to computer codes simulating complex and non-monotonic physical phenomena. 
Indeed, it is rarely simple to apply this principle when implementing UQ methods based on a set of computer experiments that provide a whole range of values for the output quantity $Y$.

To address this issue, a new UQ branch of robustness analysis has emerged in recent years in the field of sensitivity analysis. 
It consists of evaluating the impact of the choice of the inputs' distributions and, more precisely, involves an analysis of how values of the QoI vary with respect to these choices. 

An initial strategy is to consider a whole set of input distributions and analyze the related output distributions.
For global sensitivity analysis, \citet{hart2018robustness} use ``optimal perturbations'' of probability density functions to analyze the robustness of  variance-based sensitivity indices (called Sobol indices \citep{sob93}).
\citet{meynaoui} and \citet{chabal18} propose approaches that deal with ``second-level'' uncertainty, i.e., uncertainty in the parameters of the input distributions.
An alternative approach known as ``optimal uncertainty quantification''  avoids specifying the input probability distributions, transforming the problem into the definition of constraints on moments \citep{owhadi_optimal_2013,stegam19}.
The latter is out of the scope of the present work; here we consider that the initial probability distributions input by the user remain important.

In real-world uncertainty quantification studies in engineering, input distributions are typically truncated, as we are dealing with  physical parameters that in reality belong to finite-dimensional compact sets. It is therefore natural to assume that there is no uncertainty in the support of input random variables. In this paper, we also assume that the inputs are mutually independent.
Keeping in mind that our goal is to directly deal with input distributions (and not to consider second-level uncertainty), one particularly interesting solution has been proposed in the context of reliability-oriented sensitivity analysis by \citet{lem14} (see also \citet{lemser15,suebou16}): perturbed-law based indices (PLI).
A density perturbation consists of replacing the density $f_i$ of one input $X_i$ by a perturbed one $f_{i\delta}$, where $\delta \in \mathbb{R}$ represents a shift in the value of a specific moment (e.g., the mean or variance). 
Among all densities which have an equivalent shift in the chosen moment, $f_{i\delta}$ is defined as that which minimizes the Kullback-Leibler divergence from $f_i$.
This method has been applied to compute a QoI that corresponds to a failure probability \citep{iooleg19,perdef19},  a quantile \citep{sueioo17,lar19} and a superquantile \citep{iooss_robustness_2020,largau20}.

However, this method is not fully satisfactory. Indeed, the interest of perturbing moments is arguable: perturbing the mean or variance of input distributions is essentially an arbitrary decision. Moreover, it has been observed that applying the same perturbation to two different parametric distributions can yield significant differences in the value of the Kullback-Leibler divergence (between the initial and perturbed density). 

Another possibility described in \citet{perdef19} is to use an iso-probabilistic operator to transform all input random variables into  standard Gaussian ones.
This makes perturbations comparable when applied in this new space, but it remains difficult to translate this interpretation back into the initial physical space, that which is of interest to users.
We note in passing another type of robustness analysis that has been proposed in quantitative finance by \citet{cont:hal-00413729}. These authors investigate whether the estimated QoI is sensitive to small perturbations of the empirical distribution function. To this end, they define the robustness of a QoI in terms of its continuity with respect to the Prokhorov distance on the set of integrable random variables. 

The goal of the present paper is to propose a novel approach to perturbing probability distributions.
It relies on density perturbations based on the Fisher distance \citep{Costa} as a measure of dissimilarity between an initial density $f_i$ and a perturbed one $f_{i\delta}$. 
This distance defines a geometry on spaces of probability measures known as \emph{information geometry} \citep{Nielsen2013}. 
The statistical interpretation of the Fisher distance provides an equivalence between perturbations of non-homogeneous quantities, and consequently a coherent framework for robustness analysis. 
Before presenting this approach, we first review existing density perturbation methods in \Cref{Section 2}. 
\Cref{Section 3} is dedicated to the description of our method and a discussion of numerical methods.
\Cref{Section 4} illustrates our density perturbation method for the practical robustness index (PLI). 
We apply this to a toy example function and an industrial case study in \Cref{Section 5}. This is followed by conclusions and research options going forward.
The online supplementary materials contain four further sections describing: 1) some numerical results on Fisher spheres computations; 2) the \emph{reverse importance sampling} method; 3) the theoretical properties of the PLI-quantile estimator; and 4) an application of the PLI to another toy model.

\section{Previous approaches to density perturbation for uncertainty quantification robustness analysis}
\label{Section 2}

The method of \citet{lemser15}, later known as PLI by \citet{suebou16}, is based on the idea of perturbing input densities.
It aims to provide a practical counterpart to the general idea of analyzing the output QoI of a model in a UQ framework when one or several parameters of the input probabilistic model (considered as the reference) is/are modified. 
This strategy can be seen as a way to take into account an ``error term'' that could be added to an imperfectly-known input distribution.

\subsection{Kullback-Leibler divergence minimization}
\label{Section 2.1}

To construct a perturbed distribution $f_{i\delta}$ from a distribution $f_i$, the approach of \citet{lemser15} is nonparametric. This method essentially aims to analyze perturbations in the most common characteristics of input distributions such as the mean or variance. To illustrate this for the mean, let us assume the random variable $X_i\sim  f_i$ has mean $\mathbb{E}[X_i]=\mu$. By definition, the perturbed density will have a mean of $\mu+\delta$. However, this is clearly insufficient to fully determine the perturbed distribution and in particular explicitly access the value of $f_{i\delta}$ over the whole domain of $X_i$. Amongst all densities with a mean of $\mu+\delta$, $f_{i\delta}$ is defined as the solution of the minimization problem:
\begin{equation}
f_{i\delta} = \argmin\limits_{\pi \in \mathcal{P},\;
s.t \: \mathbb{E}_{\pi}[\mathbf{X_i} ] = \mathbb{E}_{f_i}[X_i] + \delta} KL(\pi|| f_i) \ ,
\end{equation}
where $\mathcal{P}$ is the set of all probability measures which are absolutely continuous with respect to $f_i$. This approach basically consists of perturbing the chosen parameter while changing the initial model as little as possible. Under this definition, ``changing'' the model corresponds to increasing the entropy, where the Kullback-Leibler divergence between two densities $f$ and $\pi$ is given by: 
\begin{equation}
KL(\pi||f) = \int \log\left(\frac{\pi(x)}{f(x)}\right)f(x) \,dx \ .
\end{equation}
This method can be applied to higher-order moments (for instance moments of order 2---to define a variance perturbation) and, more generally, to constraints that can be expressed as a function of the perturbed density, such as quantiles \citep{lem14}. 
Notice that in the case of an initial Gaussian distribution, the perturbed distribution remains Gaussian with simply a change in the mean or variance. 

In the general case, this method has several drawbacks. First, the likelihood ratio between $f_{i\delta}$ and $f_i$ (see section \ref{Section 4.1}) may not have an analytic form, which leads to numerical difficulties. Second, this method requires that moments be defined for the initial density.
Third, the main difficulty concerns the interpretation of results. Indeed, each input containing uncertainty in the UQ model is perturbed using a range of values of $\delta$. However, the choice of the moment to be perturbed remains arbitrary. Such problems are particularly worrisome when uncertainty in the input parameter is epistemic \citep{KIUREGHIAN2009105} (meaning it comes from a lack of knowledge). Perturbing only the mean or variance of such an input distribution may be overly limiting when trying to satisfy regulatory authorities focused on being overly conservative. 

We also recall that all input random variables are assumed to be mutually independent. Nonetheless, the effect of perturbations can be considered only for each variable individually, and in absolute terms (the same $\delta$ shift may have a quite different impact on different input densities). This method thus proves challenging when one wants to compare the relative impact of perturbations on different inputs.

\subsection{Standard space transformations}
\label{Section 2.2}

To interpret a $\delta$ shift on an input distribution and in particular to enable inputs to be compared with respect the impact on the QoI of the same perturbation, an equivalence criterion between inputs is required. An idea developed by \citet{perdef19} consists of applying perturbations in the so-called \emph{standard space} (instead of the initial physical space) in which all input distributions are identical, thus making all perturbations equivalent. 
Then the perturbed densities are obtained by applying a reverse transform. 

In the case of independent inputs, the required distribution transform is a simple inverse probability one. Given a real-valued random vector $\textbf{X} = (X_1,\ldots,X_d)^T$ with cumulative distribution function (cdf) $F$, the transform is the random vector $\textbf{S} = \Phi^{-1}(F(\textbf{X} ))$, where $\Phi$ is the cdf of the standard Gaussian distribution $\mathcal{N}(\textbf{0} , \textbf{I}_d )$. Consequently, $\textbf{S}$ follows a standard Gaussian distribution whatever the initial distribution $F$. In the Kullback-Leibler divergence minimization framework (see \Cref{Section 2.1}), a perturbation of the mean in the standard space simply consists of a mean shift without the standard deviation changing. For instance, for a mean shift of $\delta \in \mathbb{R}^d$, the perturbed distribution is defined by the inverse transformation $F_{\delta} = F^{-1}(\Phi(\textbf{S} + \delta))$ in order to get back from the standard space to the physical space. This therefore leads to an analytic expression for the perturbed density $f_{i\delta}$ in the one-dimensional case via the change of variable formula \citep[p.318]{Stirzaker2003}:

\begin{equation}
f_{i\delta}(x) = e^{\frac{- \delta^2 +2\delta\Phi^{-1}(F_i(x))}{2}}f_i(x) \; ,
\label{eq:rosenblatt}
\end{equation}
where $F_i$ is the cdf of the variable $X_i$ for $i \in \{1,\ldots,d\}$. This simple formula makes the perturbed density and likelihood ratio easy to compute. 

However, similar perturbations in the standard space can lead to quite different ones in the physical space, depending on the initial distributions.
As an example, Figure \ref{fig: kullback} depicts the Kullback-Leibler divergence (approximated with Simpson's rule \citep{abraste74}) of two specific distributions (the Triangular\footnote{The triangular distribution $\mathcal{T}(-1,0,1)$ is parametrized by its minimum $a$, mode $b$, and maximum $c$.} $  \mathcal{T}(-1,0,1)$ and the Uniform  $\mathcal{U}[-1, 1]$) with their respective distributions in the standard space as the value of $\delta$ changes.
We see that the Kullback-Leibler divergence can behave very differently for different distributions in the physical space, even though the exact same perturbation has been applied in the standard space.
Furthermore, there is no general rule for estimating the mean of a perturbed physical input in its physical space for a given mean perturbation of the input in the standard space. Such difficulties are amplified when considering perturbations involving moments other than the mean.
For instance, there is no general equivalence in the physical space between perturbations applied to the mean and standard deviation of the same input probability distribution in the standard space.
Overall, it would seem generally difficult, if not impossible, to have a simple way to convert results provided by this method into a relationship between input and output physical quantities, making such results difficult to interpret. 

\begin{figure}[!ht]
    \centering
    \includegraphics[scale = 1]{./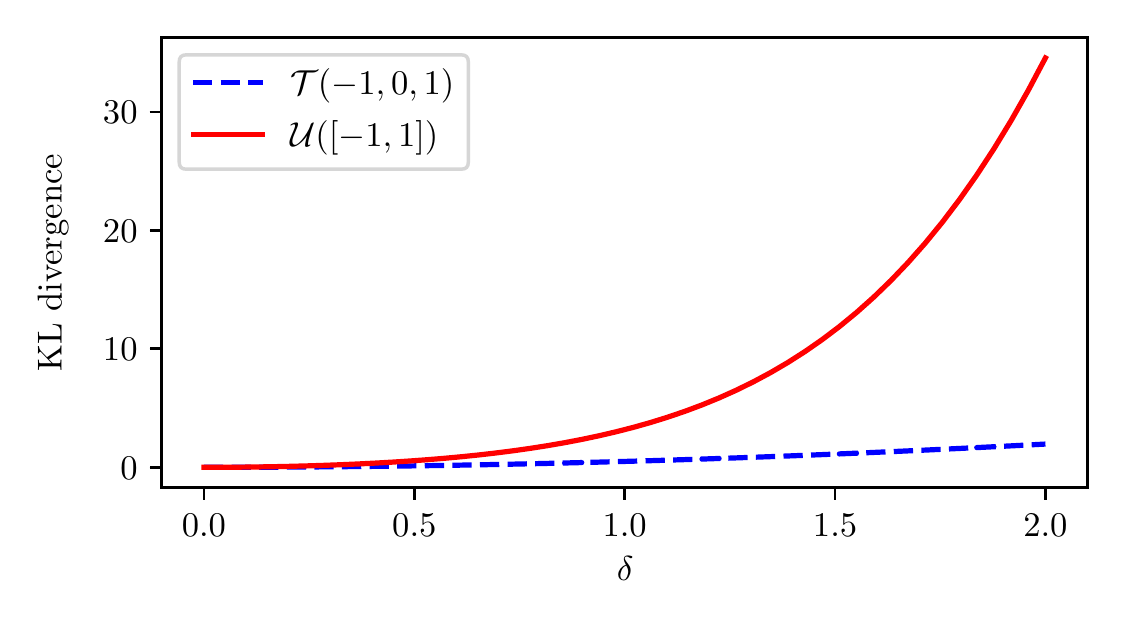}
    \caption{The Kullback-Leibler divergence between the initial distribution and the perturbed one for perturbations of the mean, $\delta \in [0, 2]$. Further details on perturbed distributions can be found in \cite{lem14} and \cite{lemser15}.}
    \label{fig: kullback}
\end{figure}

\section{A perturbation method based on information geometry}
\label{Section 3}

The Kullback-Leibler divergence can be interpreted as the power of a hypothesis test with null hypothesis: ``$X_i$ follows the distribution $f_i$'' and alternative hypothesis: ``$X_i$ follows the distribution $f_{i\delta}$'' \citep{Eguchi}.  For this reason, it would appear to be an appropriate tool to measure how far a perturbed density is from the initial one and thus provides a formal counterpart to the vague idea of ``uncertainty in the distribution''.
However, in a practical robustness analysis context, its use often implies embedding inputs in a standard Gaussian space to begin with, in order to allow a comparison of perturbations of different input distributions, as well as to enable easy computation of perturbed densities. This transformation of the inputs leads to manipulating perturbations in a non-physical input space, making it impossible to interpret them in terms of how they modify physical inputs' densities.

\subsection{The Fisher distance}
\label{Section 3.1}

To try to form an intuitive understanding of the consequences of such  perturbations on the distribution of the output variable $Y$, it is necessary to base our perturbation method on a metric which allows us to compare perturbations coming from different inputs distributions in the UQ model. For instance, if one input is Gaussian and another is log uniform, their respective perturbations (associated with the same level of perturbation) should be able to be interpreted in the same way.
In particular, the perturbation method should result in identical perturbed densities when applied to two different parametric representations of the same input distribution.
The Fisher distance \citep{rao} is consistent with these wishes. It is based on the local scalar product induced by the Fisher information matrix in a given parametric space, and defines a Riemannian geometry on the corresponding set of probability measures as on any Riemannian manifold with its associated metric.

Consider the family of parametric densities $\mathcal{S} = \{f_{\theta}, \theta \in \Theta \subset \mathbb{R}^r\}$. We recall that every input variable represents a physical parameter with known domain of validity, and therefore, for all $\theta$ in $\Theta$, the support of $f_{\theta}$ is assumed to be a compact set of $\mathbb{R}$. The metric associated with the coordinate function $\theta$, called the Fisher (or Fisher-Rao) metric, is defined as:
$$
I(\theta_0) = \mathbb{E}\left[\nabla_{\theta}\log f_{\theta}(X)|_{\theta=\theta_0}(\nabla_{\theta}\log f_{\theta}(X)|_{\theta=\theta_0})^T\right] \ ,
$$
where $I(\theta_0)$ is the Fisher information matrix evaluated at $\theta_0$ for this statistical model.
The Fisher information, well-known in the fields of optimal design, Bayesian statistics, and machine learning, is a way of measuring the amount of information that an observable random variable $X$ carries about an unknown parameter $\theta$ of the distribution of $X$. 
The Fisher information matrix defines the following local inner product in $\mathcal{S}$ for $u \in \mathbb{R}^r$ and $v \in \mathbb{R}^r$:
\begin{equation}
   \langle u, v \rangle_{\theta} = u^TI(\theta)v \ .
  \label{inner}
\end{equation}

Given two distributions $f_{\theta_0}$ and $f_{\theta_1}$ in a manifold $\mathcal{S}$, a path from $f_{\theta_0}$ to $f_{\theta_1}$ is a piecewise smooth map $q: [0, 1] \rightarrow \Theta$ satisfying $q(0) = \theta_0$ and $q(1) = \theta_1$. Its length \citep{sternberg1999} $l(q)$ satisfies the following equation:
\begin{equation}
l(q) = \int\limits_{0}^{1}  \sqrt{\langle \dot q(t), \dot q(t) \rangle_{q(t)}}dt \ ,
\end{equation}
where $\dot q$ is the derivative of $q$.
Similarly, the energy \citep{sternberg1999} $E(q)$ of a path is defined by the equation:
\begin{equation}
E(q) = \int_{0}^{1} \frac{1}{2}  \langle \dot q(t), \dot q(t) \rangle_{q(t)}dt \ .
\end{equation}
The Fisher distance between $f_{\theta_0}$ and $f_{\theta_1}$is defined as the minimal length over the set of paths $\mathcal{P}(f_{\theta_0}, f_{\theta_1})$ from $f_{\theta_0}$ to $f_{\theta_1}$:
\begin{equation}
d_F(f_{\theta_1}, f_{\theta_2}) = \inf_{q \in \mathcal{P}(f_{\theta_1}, f_{\theta_2})} l(q) \ .
\end{equation}
The path $\gamma$ minimizing this length---or equivalently minimizing the energy---is called a geodesic \citep{Costa}. The specific choice of the Fisher information matrix for a Riemannian metric matrix leads to a very interesting statistical interpretation, as shown in \citet[p.27]{amari};
it is directly related to the Cramer-Rao lower bound \citep{rao} which states that, for any unbiased estimator $\widehat{\theta}$ of $\theta$, the covariance matrix $\mbox{Var}(\widehat{\theta})$ is bounded by $I(\theta)^{-1}$. This means that the Fisher information is the maximum amount of information about the value of a parameter one can extract from a given sample. More formally, under some regularity conditions (given by \cite[Theorem 3.3]{newey_chapter_1994}), if $x_1,\ldots,x_n$ are $n$ independent observations distributed according to a density $f_{\theta}$, the maximum likelihood estimator $\widehat{\theta}_n$ of $\theta$ converges weakly to a Gaussian distribution with mean $\theta$ and covariance $\displaystyle \frac{I(\theta)^{-1}}{n}$. 
The density of $\widehat{\theta}_n$, denoted $p(\widehat{\theta}_n, \theta)$ is written: 
\begin{equation}
p(\widehat{\theta}_n, \theta) = \frac{1}{\sqrt{(\frac{(2\pi)}{n})^r \det( I(\theta)^{-1})}}\exp{\left(-\frac{n(\widehat{\theta}_n - \theta)^TI(\theta)(\widehat{\theta}_n - \theta)}{2}\right)} \ .
\end{equation}
When $n \rightarrow +\infty$, this probability density is proportional to $(\widehat{\theta}_n - \theta)^TI(\theta)(\widehat{\theta}_n - \theta)$ \citep{amari}, which is the local inner product defined in equation~\eqref{inner}. This result can be interpreted as follows: the Fisher distance between two distributions $f_{\theta}$ and $f_{\theta'}$ represents the separability of the two distributions by a finite sample of independent observations sampled from the  distribution $f_{\theta}$ \citep{amari}.

Let us illustrate the Fisher distance in a simple example. Consider the statistical manifold of univariate normal distributions $\mathcal{S} = \{\mathcal{N}(\mu, \sigma^2), (\mu, \sigma) \in \mathbb{R} \times \mathbb{R}_{+}^{*}\}$. The Fisher information matrix has the analytical form \citep{Costa}:
\begin{equation}
I(\mu, \sigma) = \begin{pmatrix}
1/\sigma^2 & 0 \\
0 & 2/\sigma^2
\end{pmatrix}.
\end{equation}
If we apply the change of coordinates $\displaystyle \phi (\mu, \sigma) \rightarrow (\frac{\mu}{\sqrt{2}}, \sigma)$, the related geometry is the hyperbolic geometry in the Poincar\'e half-plane \citep{Stillwell} in which the geodesic and distance between two normal distributions are known analytically \citep{Costa}. Geometrically, geodesics are the vertical lines and half-circle centered on the line $\sigma = 0$. 
Further details on interpretations of information geometry can be found in \cite{Costa}.
 
Figure \ref{fig: geodesics} shows the position of four Gaussian distributions in the $\displaystyle (\frac{\mu}{\sqrt{2}}, \sigma)$ half-plane.
It is clear that the distributions $C$ and $D$ are more difficult to distinguish than $A$ and $B$, though in both cases the 2-Wassertein distance $W_2$ \citep{villani} is the same. The hyperbolic geometry induced by the Fisher information provides a representation in accordance with this intuition. Indeed, the two dashed curves are the geodesics between points $A$ and $B$, and $C$ and $D$. We observe that the Fisher distance between $A$ and $B$ is greater that that between $C$ and $D$. This illustrates how information geometry provides a well-grounded framework to measure statistical dissimilarities in probability measure spaces.

\begin{figure}[!ht]
  \begin{subfigure}{0.49\textwidth}
      \centering
      \includegraphics[width=\textwidth]{./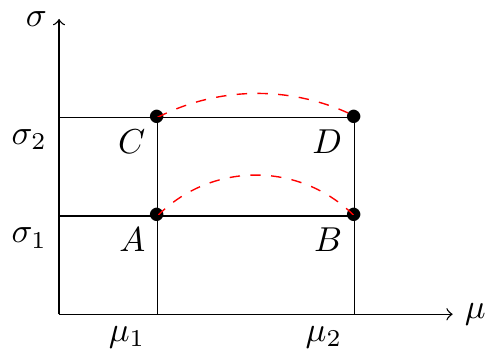}
  \end{subfigure}
  \begin{subfigure}{0.49\textwidth}
      \centering
      \includegraphics[width=0.8\textwidth]{./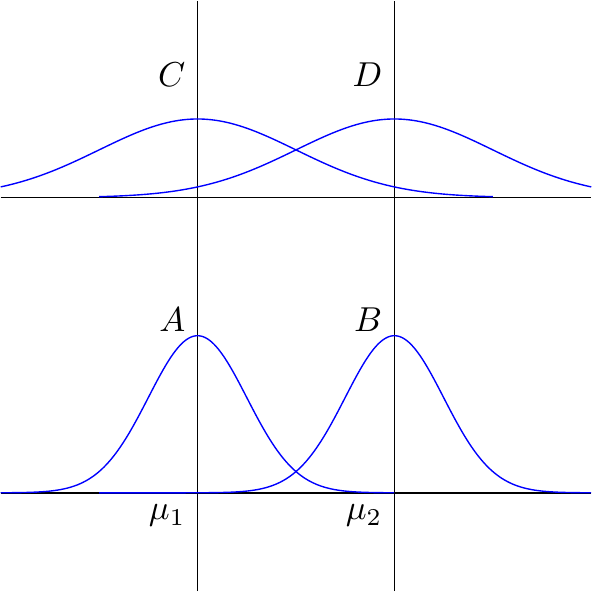}
  \end{subfigure}
  \caption{Four Gaussian distributions represented in the parameter space (left) and their respective distributions (right). Although $W_2(A, B) = W_2(C, D)$, it is easier to distinguish $A$ from $B$ than $C$ from $D$. The dashed curved lines are geodesics of different length in the $(\frac{\mu}{\sqrt{2}}, \sigma)$ plane.}
  \label{fig: geodesics}
\end{figure}

In summary, the Fisher distance provides a satisfactory and coherent grounding for our notion of density perturbation. Before continuing, let us define a perturbation of a density $f$ to be of level $\delta$ if the Fisher distance between $f$ and the perturbed density $f_{\delta}$ is equal to $\delta$. 
The set of all perturbations of $f$ at level $\delta$ is then the Fisher sphere of radius $\delta$ centered at $f$, whenever this perturbation is applied to one or another of the parameters. This implies that, in this framework, we do not consider one specific perturbed distribution, but rather, a non-finite set of probability densities. The following section is dedicated to the development of a numerical method to compute Fisher spheres of radius $\delta$ centered at $f$.

\subsection{Computing Fisher spheres}

As detailed in \Cref{Section 3.1}, geodesics are defined as the solution of a minimization problem. Specifically, a geodesic is a path with minimal length or energy (denoted $E$). Given a smooth map $q: [0, 1] \rightarrow \mathcal{S}$, we have
\begin{equation}
E(q) = \int_{0}^{1} \frac{1}{2}  \langle \dot q(t), \dot q(t) \rangle_{q(t)}dt \ .
\end{equation}
In the following we denote $\displaystyle L(t, q, \dot q) = \frac{1}{2} \langle \dot q(t), \dot q(t) \rangle_{q(t)}$ the Lagrangian of the system. The energy of a path can be rewritten as
\begin{equation}
E(q) = \int_{0}^{1} L(t, q , \dot q) dt \ .
\end{equation}
A necessary condition for the path $q$ to minimize the energy $E$ is to satisfy the Euler-Lagrange equation (see \citet{gelfand} for details):
\begin{equation}
\frac{\partial L}{\partial q} = \frac{d}{dt}\left(\frac{\partial L}{\partial \dot q}\right) \ .
\end{equation}
We denote $\displaystyle p = \frac{\partial L}{\partial \dot q}$ and obtain by differentiation of the quadratic form $L(t, q, \dot q) = \frac{1}{2}\dot{q}^T I(q) \dot{q}$ that $p = I(q)\dot{q}$ and $\dot{q} = I^{-1}(q)p$. Then, inspired by Lagrangian mechanics theory \citep[p.65]{Arnold}, the Hamiltonian $H(p,q)$ defined by
\begin{equation}
    \begin{array}{rcl}
    H(p, q) & = & p^T\dot q - L(t, q, \dot q) = p^T I^{-1}(q) p - \frac{1}{2}\dot{q}^T I(q) \dot{q} \\
            & = & \frac{1}{2} p^T I^{-1}(q) p 
    \end{array}
\label{hamil}
\end{equation}
is constant whenever $q$ is a geodesic. Eq.~\eqref{hamil} is derived from the Euler-Lagrange equation, and implies that $(p,q)$ follows a system of ordinary differential equations (ODE) known as Hamilton's equations:
\begin{equation}
    \left \{
\begin{array}{rcccl}
    \dot q & = & \displaystyle \frac{\partial H}{\partial p} & = & I^{-1}(q)p \,,\\
    \dot p & = & - \displaystyle \frac{\partial H}{\partial q} & = & \displaystyle\frac{\partial L(t, q, I^{-1}(q)p)}{\partial q}\,.
\end{array}
\right.
\label{hamileq}
\end{equation}

The goal is to find any geodesics $q$ that satisfy $q(0) = \theta_0$ and $d_F(f, q(1)) = \delta$. This corresponds to computing the Fisher sphere centered at $f_{\theta_0}$ with radius $\delta$. The only degree of freedom remaining to solve the ODE system~\eqref{hamileq} entirely is the initial velocity $p(0)$. Notice that the Hamiltonian is equal to the kinetic energy, since $p = I(q)\dot q$. As the Hamiltonian is constant on a geodesic, we have for all $t$:
\begin{equation}
\frac{1}{2} \langle \dot q(t), \dot q(t) \rangle_{q(t)} = k \ ,
\end{equation}
where $k$ is non-negative. The length of $q$ is therefore equal to
\begin{equation}
\int\limits_{0}^{1} \sqrt{\langle \dot q(t), \dot q(t) \rangle_{q(t)}} dt = \sqrt{2k} \ ,
\end{equation}
whereby $\delta = \sqrt{2k}$. Therefore, Eq.~\eqref{hamil} can be rewritten:
\begin{equation}
\delta = \sqrt{2k} \iff p^TI^{-1}(q)p = \delta^2.
\label{velocity}
\end{equation}
Taking equation~\eqref{velocity} at the initial state $t=0$, we can determine all of the initial velocities for which  $d_F(q(0), q(1)) = \delta$. These are needed to solve the system of ODEs~\eqref{hamileq} and compute the geodesics.

Generally, computing the geodesic between two distributions is challenging. Methods relying on shooting algorithms have been developed for this problem \citep{LeBrigant2017}. However, in our framework, we don't want to compute the distance between two given distributions, which implies to find the particular geodesic joining these, but we aim at screening a Fisher sphere as well as possible for a fixed distance (as detailed in section \ref{Section 4.2}).
In the Section 1 of the online supplementary material, we focus on numerical methods for computing geodesics by solving the system of ODEs~\eqref{hamileq}. We then illustrate these methods by computing Fisher spheres on the Gaussian manifold $\mathcal{S} = \{\mathcal{N}(\mu, \sigma^2), (\mu, \sigma) \in  \mathbb{R} \times \mathbb{R}_{+}^{*}\}$.

\section{Applying this to perturbed-law based indices}\label{Section 4}

The UQ robustness analysis detailed in \Cref{sec:intro} and \Cref{Section 2} aims to quantify the impact of a lack of knowledge about an input distribution on the UQ of model outputs. In \Cref{Section 3}, a coherent formal definition of density perturbation was proposed. 
We now illustrate how to use this solution to define a practical robustness analysis method. 
First however, analyzing the effect of perturbing an input density  requires the definition of some criterion or ``index''  to summarize the effects on a QoI.

\subsection{Definition of perturbed-law based indices}\label{Section 4.1}

A PLI aims to measure the impact of the modification of an input density on a QoI, such as a quantile or a threshold exceedance probability of the model output \citep{lemser15,suebou16}.
In the following, we focus on a quantile of order $\alpha$, often used  as a risk measure in real-world applications  \citep{mouwil17,delsue18,lar19}. 

If we consider $\mathcal{X}$ to be a compact set of $\mathbb{R}^d$, then given the random vector $\textbf{X} = (X_1,\ldots,X_d) \in \mathcal{X}$ of our $d$ independent imperfectly known input variables, $G(\cdot)$ our numerical model, and $Y = G(\textbf{X}) \in \mathbb{R}$ the model output, the quantile of order $\alpha$ of $Y$ is given by:
\begin{equation}
q^{\alpha} = \inf\{t \in \mathbb{R}, F_{Y}(t) \geq \alpha\} \ ,
\end{equation}
where $F_Y$ is the cdf of the random variable $Y$. In order to compute the $i$-th PLI (see below), we first change the density $f_i$ of $X_i$ into a density $f_{i{\delta}}$, where $\delta \in \mathbb{R^+}$ represents the level of perturbation. The perturbed quantile is then given by:
\begin{equation}
q^{\alpha}_{i \delta} = \inf\{t \in \mathbb{R}, F_{Y, i \delta}(t) \geq \alpha\} \ ,
\end{equation}
where $F_{Y, i \delta}$ is the cdf corresponding to the input variable $X_i$ sampled from $f_{i\delta}$. The $i$-th PLI $S_{i}$ is then simply defined as the relative change in the output quantile generated by the perturbation, i.e.,
\begin{equation}
S_{i}(f_{i\delta}) = \frac{q_{i \delta}^{\alpha} - q^{\alpha}}{q^{\alpha}} \ .
\end{equation}
This definition slightly differs from that proposed in previous studies \citep{lemser15,sueioo17}, but is preferred here as it has been found to be more intuitive in a number of engineering studies that used PLI. Indeed, it simply consists of the relative variation of the quantile when the $i$-th input is submitted to a density perturbation and, as such, allows for a clearer interpretation. 

In many applications including nuclear safety exercises, computer models are costly in terms of CPU time and memory. This means that  
only a limited number $N$ of code runs are available for estimating all of the PLIs. 
Thus, we have a sample $\mathcal{Y}_N = \{y^{(n)}\}_{1 \leq n \leq N}$ of $N$ outputs of the model from a sample $\mathcal{X}_N = \{ \textbf{X}^{(n)} = (x_1^{(n)},...,x_d^{(n)})\}_{1 \leq n \leq N}$ of $N$ independent realizations of $\textbf{X}$. 
The estimation of the quantile is then based on the empirical quantile estimator  $\displaystyle \widehat{q}^{\alpha}_N = \inf\{ t \in \mathbb{R}, \widehat{F}_{Y}^{N}(t) \leq \alpha\}$, where $\displaystyle\widehat{F}_{Y}^N(t) = \frac{1}{N} \sum\limits_{n = 1}^N \mathds{1}_{(y^{(n)} \leq t)}$ is the empirical estimator of the cdf of $Y$. In order to estimate the perturbed quantile $\displaystyle\widehat{q}_{N, i\delta}^{\alpha}$ from the same sample $\mathcal{X}_N$, we use  reverse importance sampling \citet{hes96} (see Section 2 of the online supplementary materials) to compute $\displaystyle\widehat{F}_{Y,i\delta}^{N}$ \citep{delsue18}:
\begin{equation}
\widehat{F}_{Y,i\delta}^{N}(t) = \frac{\sum\limits_{n = 1}^N L_i^{(n)} \mathds{1}_{(y^{(n)} \leq t)}}{\sum\limits_{n = 1}^N L_i^{(n)}} \ ,
\end{equation}
with $L_i^{(n)}$ the likelihood ratio $\displaystyle\frac{f_{i\delta}(x_i^{(n)})}{f_i(x_i^{(n)})}$. The estimator of the PLI is then
\begin{equation}
\displaystyle \widehat{S}_{N, i\delta} = \frac{\widehat{q}_{N, i\delta}^{\alpha} - \widehat{q}_{N}^{\alpha}}{\widehat{q}_{N}^{\alpha}}\,.
\end{equation}
The theoretical properties of $\widehat{S}_{N, i\delta}$ are studied in Section 3 of the online supplementary materials.

As presented in \Cref{Section 3}, the Fisher sphere of radius $\delta$  centered on the initial input distribution $f_i$ and written $\partial\mathcal{B}_{F}(f_i, \delta) = \{g, \: d_F(f_i, g) = \delta\}$, provides a good base for perturbing distributions.
We do not consider one specific perturbation at level $\delta$, but instead a whole set of them: $\partial\mathcal{B}_{F}(f_i, \delta)$. Over this set, we compute the maximum $S_{i\delta}^{+}$ and the minimum $S_{i\delta}^{-}$ of the PLI:
\begin{equation}
S_{i\delta}^{+} = \max\limits_{g \in \partial\mathcal{B}_{F}(f_i, \delta)} S_{i}(g) \ ,
\end{equation}
\begin{equation}
S_{i\delta}^{-} = \min\limits_{g \in \partial\mathcal{B}_{F}(f_i, \delta)} S_{i}(g) \ ,
\end{equation}
where $S_i(g)$ is the PLI with $g$ the perturbed density for the variable $X_i$.

Among all perturbed distributions at level $\delta$, we look for those which most deviate the quantile from its original value. These two quantities $S_{i\delta}^{+}$ and $S_{i\delta}^{-}$, called OF-PLI (for ``Optimal Fisher-based PLI''), are measures of the robustness of the numerical code while taking into account uncertainty in the input distribution.

\subsection{Practical implementation}
\label{Section 4.2}

The main problem when estimating the OF-PLI arises from the available sample size being finite, and possibly not as large as it needs to be.
Indeed, due to inherent computational budget constraints in practice, there might not be enough sample points to correctly compute the perturbed quantile (and its confidence interval) at certain levels of perturbation.
Thus, the key issue is to determine the extent to which an input distribution should be perturbed.
To address this, we propose adapting the empirical criterion from \cite{iooss_robustness_2020} to help establish a maximal perturbed level $\delta_{max}$. For a proper estimation of OF-PLI indices, the number of points $N_\mathcal{Y}$ in the output sample $\mathcal{Y}_N$ exceeding the $\delta$-perturbed quantile needs to be sufficiently high.
As the sample size is finite, the $\delta$-perturbed quantile can often not be computed accurately enough for high values of $\delta$. In practice, a value of $N_\mathcal{Y}=10$ has been decided on (after numerous numerical tests) as the smallest allowed for computing a PLI-quantile. As soon as a distribution on the Fisher sphere exceeds the criterion in \citep{iooss_robustness_2020}, the corresponding radius is taken as $\delta_{max}$.


The estimation of the quantities of interest $S_{i\delta}^{+}$ and $S_{i\delta}^{-}$ is described in Algorithm \ref{alg:OF-PLI}.
We emphasize that this approach only applies to expensive computer models. Indeed, the bootstrap variance of the estimated quantile with reverse importance sampling tends to be very large, as illustrated in \cite{iooss_robustness_2020}.
This is due to the likelihood ratio, which explodes locally. Thus, when dealing with a cheap code, one can directly resample over the perturbed distribution in order to estimate the output quantile. In this situation, there is no limiting level of perturbation $\delta_{max}$.

\begin{algorithm}[!ht]
\caption{Estimation of $S_{i\delta}^{+}$ and $S_{i\delta}^{-}$}\label{alg:OF-PLI}
\begin{enumerate}
\item Initialisation: $\delta$ (level of perturbation), $i \in \{ 1,\ldots, d\}$ (input number), a sample of $K$ points on the Fisher sphere of radius $\delta$ centered at $f_i$ (using the numerical method of Section 2 of the online supplementary material).
\item For $k=1,\ldots,K$:
    \begin{enumerate}
    \item estimate the $\alpha$-quantile $q_{i \delta}^{\alpha, (k)}$ for the $k$-th perturbed density $\{f^{(k)}_{i\delta}\}$ using reverse importance sampling based on the sample $\mathcal{X}_N$,
    \item check whether the number of points in the output sample below or above the perturbed quantile $q_{i \delta}^{\alpha, (k)}$ satisfies the stopping criterion $N_{\mathcal{Y}} \geq 10$,
    \item compute the PLI estimator $\widehat{S}^{(k)}_{N,i\delta}$.
    \end{enumerate}
\item the estimators $\widehat{S}_{N,i\delta}^{+}$ and $\widehat{S}_{N,i\delta}^{-}$ of the quantities of interest $S_{i\delta}^{+}$ and $S_{i\delta}^{-}$ are taken as the maximal and minimal values of the PLI sampled on the Fisher sphere $\{\widehat{S}_{N,i\delta}^{(k)}\}$.
\end{enumerate}
\end{algorithm}

In future work, OF-PLI confidence intervals (computed via bootstrap) will provide valuable additional information like for instance confidence intervals, but further work on these is required and they are not included in the industrial application (section \ref{Section 5.2}).
The code for the OF-PLI is available at \url{ https://github.com/JeromeStenger/PLI-Technometrics}.
The code for computing the older version of the PLI, called E-PLI (entropy-based PLI) in the following, is available in the \texttt{sensitivity} package of the software \texttt{R}.

\section{Perturbed-law based indices in engineering studies}\label{Section 5}

The PLI as defined here allows us to assess to what extent an output quantile can be affected by an error of magnitude $\delta$ in the characterization of an input distribution. In the following section, we compare, using a toy example, our new method (OF-PLI) to the earlier one (E-PLI). 
Following that, we illustrate the use of OF-PLI in a nuclear safety analysis of a pressurized water nuclear reactor.
Moreover, as PLIs are based on changes in input distributions, they differ from global sensitivity measures \citep{ioolem15}, which evaluate the effect of input variability for fixed probabilistic models. To study potential coherence and/or divergence between the two approaches, we compare Sobol indices and OF-PLI on an analytic  model in Section 4 of the online supplementary material.

\subsection{A toy example: the Ishigami function}

The Ishigami function \citep{Ishigami} is often used as an example for uncertainty and sensitivity analysis methods, in particular because it exhibits strong non-linearity and non-monotonicity. In this section, we apply the methodology introduced in Section \ref{Section 4.2} to estimate the OF-PLI and compare our results to E-PLI. The Ishigami function, which takes three input random variables $(X_1, X_2, X_3)$, each distributed normally as $\mathcal{N}(0,1)$, is defined with the following analytic formula:
\begin{equation}
    G(x_1, x_2, x_3) = \sin(x_1)+7\sin(x_2)^2+0.1x_3^4\sin(x_1) \ .
\end{equation}

We intend to evaluate the impact of a perturbed input distribution on the value of the $95\%$-quantile.
In this simple example where the function is cheap to evaluate, we do not use the reverse importance sampling estimator of the quantile as proposed in Section \ref{Section 4.2}. Since the computational burden of running new code simulations is negligible, we instead draw new samples of size $N=2 000$ directly from the perturbed input distributions in order to compute the output perturbed quantile. 
Hence, this toy example does not follow the reverse important sampling estimation procedure detailed in Section \ref{Section 4.2}. The OF-PLI are computed for perturbation levels $\delta$ from the interval $[0, 0.9]$.  Notice that the choice $\delta_{max}=0.9$ is arbitrary here since the reverse importance sampling estimator has been put aside. Indeed, here there is no actual limit for the maximal perturbation level as the OF-PLI are computed by resampling from the perturbed distribution. 
We chose  a value of $K=100$ trajectories over each Fisher sphere for computing the minimum and maximum of the OF-PLI. 
We also compute the $95\%$-confidence intervals calculated from $50$ values of $\widehat{S}^+_{N,i\delta}$ and $\widehat{S}^-_{N,i\delta}$.

The OF-PLI results are depicted in Figure \ref{fig: ishigami pli}.   It appears that the third input has the most impact in shifting the quantile to the right. On the other hand, the second input has more impact in shifting the quantile to the left. These results are consistent with the well-known behavior of the Ishigami function in terms both of non-linearity of the model and primary influence of the third input.

\begin{figure}[!ht]
    \centering
    \includegraphics[scale=0.95]{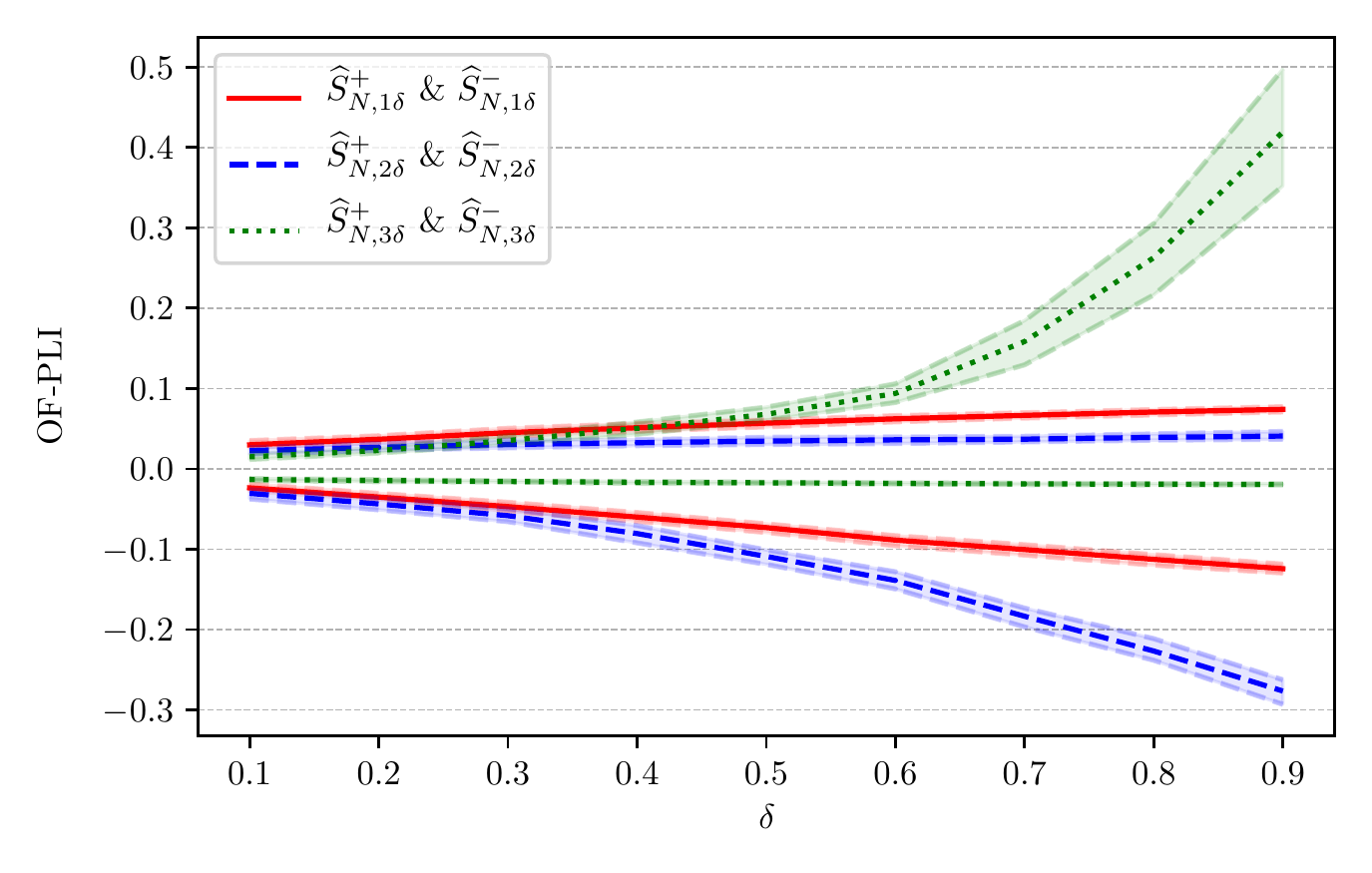}
    \caption{Minimum and maximum of the OF-PLI over the Fisher sphere with $K=100$ trajectories for $\delta$ varying in $[0,0.9]$, and their $95\%$-confidence intervals. For the Ishigami model, we directly resample from the perturbed distribution to compute the perturbed quantile instead of using the reverse important sampling estimator.}
    \label{fig: ishigami pli}
\end{figure}

Because the maximum and minimum of the OF-PLI are taken over the Fisher sphere, we depict in Figure~\ref{fig: ishigami fisher sphere} the distribution of the OF-PLI over the Fisher sphere with radius $\delta=0.9$ for the third input. 
One can see that in this case, the maximum and minimum are found respectively for high and low variance, with seemingly no change with respect to the mean.

\begin{figure}[!ht]
    \centering
    \includegraphics[scale=1]{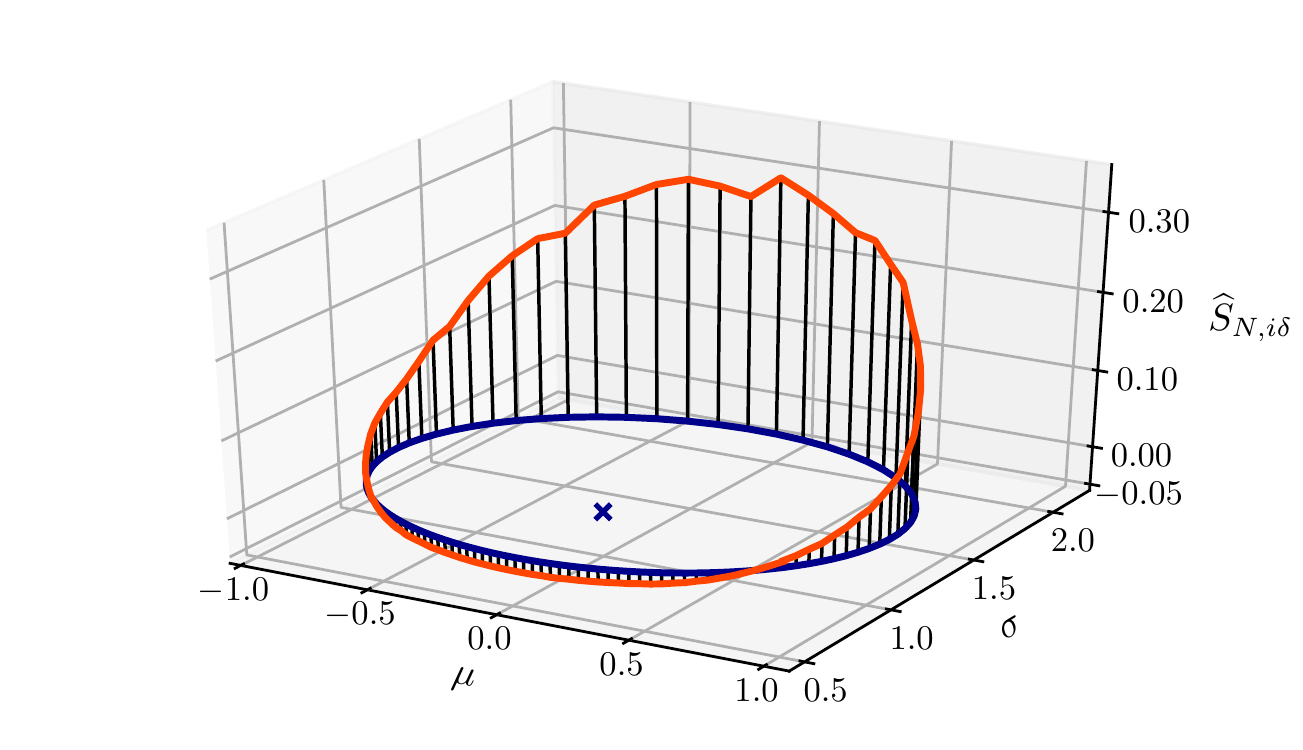}
    \caption{Value of the OF-PLI $\hat{S}_{N,i\delta}$ (red line) for the third input of the Ishigami model ($N=100$, $i=3$) over a Fisher sphere of radius $\delta=0.9$ (blue line).}
    \label{fig: ishigami fisher sphere}
\end{figure}

These results can be compared to the E-PLI ones (see Section \ref{Section 2}). As in this case, inputs follow normal distributions, applying an inverse cdf transformation is not necessary. Therefore, perturbing the mean (resp. the variance) of the input variable is equivalent to drawing straight horizontal (resp. vertical) trajectories  in the parameter space (see Figure 1 of the online supplementary material). Results are depicted in Figure~\ref{fig: ancien pli}; the mean of the Gaussian is perturbed in $[-1, 1]$ and its variance in $[0, 4]$. 
This corresponds to the range of variation of these parameters for the Fisher sphere radius varying in $[0, 0.9]$. We then compare the third input for the two methods. We saw in Figure~\ref{fig: ishigami fisher sphere} that the maximal OF-PLI was obtained with high variance and no mean perturbation, which is coherent with the results in Figure~\ref{fig: ancien pli}. However, we miss the true impact a perturbed density can induce in situations where the maximal and minimal OF-PLI are not located in these two axes, such as for instance is the case for the first variable. Hence, the E-PLI, restricted to two directions in the Fisher sphere, has limited interpretation.  

\begin{figure}[!ht]
  \begin{subfigure}{.49\textwidth}
      \centering
      \includegraphics{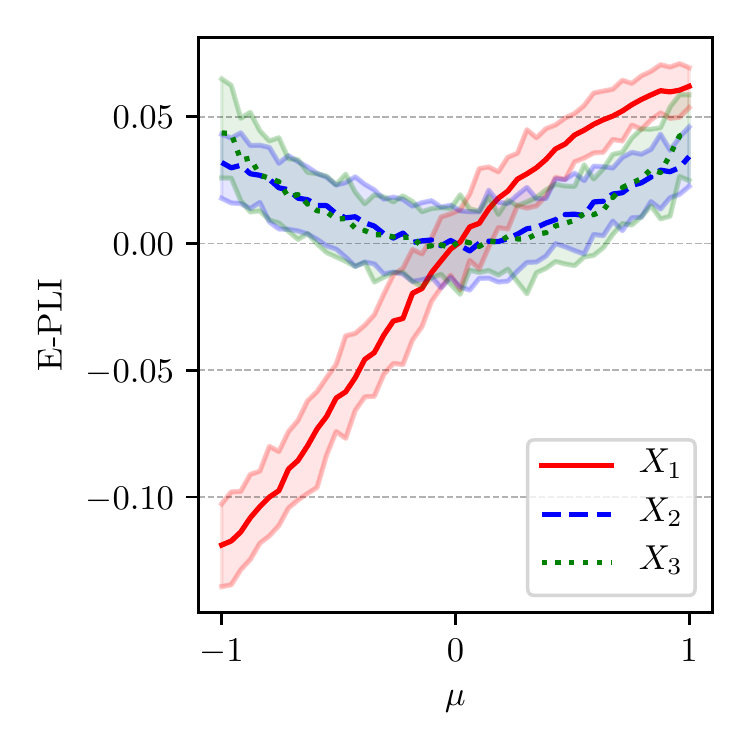}
  \end{subfigure}
  \begin{subfigure}{.49\textwidth}
      \centering
      \includegraphics{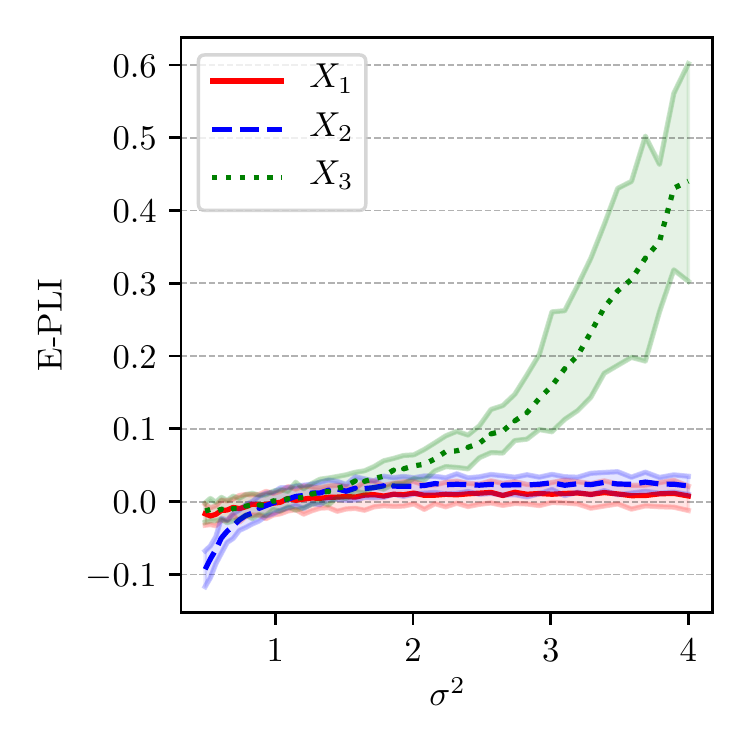}
  \end{subfigure}
  \caption{Computation of the E-PLI. Left: perturbation of the mean of the Gaussian distribution. Right: perturbation of the variance of the Gaussian distribution.}
  \label{fig: ancien pli}
\end{figure}

\subsection{A nuclear safety application}
\label{Section 5.2}

This industrial application concerns the study of the peak cladding temperature (PCT) of fuel rods in the case of a loss of coolant accident caused by an intermediate-sized break in the primary loop (IB-LOCA) in a nuclear pressurized water reactor. 
According to operating rules, this temperature must remain below a threshold to prevent any deterioration of the state of the reactor. The thermal-hydraulic transient caused by this accidental scenario is simulated with the CATHARE2 code \citep{gefant11}, providing a temperature profile throughout time for the surface of the nuclear core assemblies \citep{mazvac16}. The thermal-hydraulic model involves boundary and initial conditions, and many physical parameters (heat transfer coefficient, friction coefficient, etc.) whose exact values are unknown.
The probability distributions of these inputs can be obtained from data and expert knowledge, or by solving inverse problems on experimental data \citep{baczha19}.

Input uncertainties are propagated inside this model and the goal of UQ  is to estimate a high-order quantile of the PCT (model output).
This $\alpha$-quantile is interpreted as a pessimistic estimate of the PCT.
Like any scientific approach, this methodology is based on hypotheses; regulatory authorities require an evaluation of the effect of these hypotheses 
 on claimed results.  
Nuclear power operators are required to conduct studies in such a way to ensure that actual risks are overestimated. 
Under this ``conservative principle'' they are required to choose the most pessimistic assumption each time a modeling decision has to be made. 
In deterministic studies, this simply consists of taking the most penalizing value for each input variable. 
In this way, the resulting computation is seen as simulating the worst case scenario for the risk being examined. 
It is, however, not straightforward to implement such a principle when the numerical code is complex, with both non-monotonic effects of inputs and interactions between inputs. 
It is even harder to extend this rationale to a UQ framework that aims to represent all potential scenarios with related occurrence likelihoods.
Nevertheless, recent works \citep{lar19} have shown that the E-PLI can be useful as evidence in discussions on the choice of input distributions.

In the present application, we study a reduced-scale mock-up of a pressurized water reactor with the $7$ imperfectly known inputs given in Table~\ref{tab: table2} \citep{delsue18}. 
To compute the OF-PLI, an input-output sample of size $N=1000$ is available, coming from a space filling design of experiments \citep{fanli06} (whose points in $[0,1]^d$ have been transformed to follow the input probability distributions).
More precisely, a Latin hypercube sample minimizing the $L^2$-centered discrepancy criterion \citep{jinche05} has been used. 
The OF-PLI (with respect to a quantile of order $\alpha=0.95$) will then be estimated without any additional code run (see Section~\ref{Section 4.1}).



\begin{table}[!ht]
    \centering
    \caption{Input variables of the CATHARE2 code with their associated probability distributions.}
    \begin{tabular}{llr}
    \begin{tabular}{l}Variable\\number\end{tabular} & \begin{tabular}{l}Input\\name\end{tabular}&  Probability distribution \\
    \hline
    1 & STMFSCO & Uniform $\mathcal{U}([-44.9,63.5])$ \\
    2 & STBAEBU & Truncated Log Normal $\mathcal{LN}(0,0.76)$ on $[0.1, 10]$ \\
    3 & STOIBC1 & Truncated Log Normal $\mathcal{LN}(0,0.76)$ on $[0.1, 10]$ \\
    4 & STOIBC3 & Truncated Log Normal $\mathcal{LN}(0,0.76)$ on $[0.1, 10]$ \\
    5 & STOIDC & Truncated Log Normal $\mathcal{LN}(0,0.76)$ on $[0.1, 10]$ \\
    6 & STOICO & Truncated Log Normal $\mathcal{LN}(-0.1,0.45)$ on $[0.23, 3.45]$ \\
    7 & CLFBR & Truncated Normal $\mathcal{N}(6.4,4.27)$ on $[0,12.8]$ \\
    \end{tabular}
    \label{tab: table2}
\end{table}

Figure~\ref{fig:cathare} presents the maximum and minimum values of our two estimators $\widehat{S}_{N, i\delta}^{+}$ and $\widehat{S}_{N, i\delta}^{-}$. We compute Fisher spheres with radius $\delta$ sampled uniformly in $[0.1, 0.5]$, all respectively centered on the initial input distributions. The maximal perturbation level $\delta_{max} = 0.5$ is derived from the stopping criterion introduced in Section~\ref{Section 4.2}. On every sphere, $K=100$ perturbed densities are sampled. 
The OF-PLIs are finally estimated on a dataset of size $1000$. The stopping criterion of Section~\ref{Section 4.2} gives a maximal admissible OF-PLI of $4\%$; this value is determined from the maximal admissible quantile for which there are $N_\mathcal{Y}=10$ sample points above it. In fact, we see that $\widehat{S}_{N, 7\delta}^{+}$ is close to this maximal admissible value. 

\begin{figure}[!ht]
    \centering
    \includegraphics[scale = 0.7]{./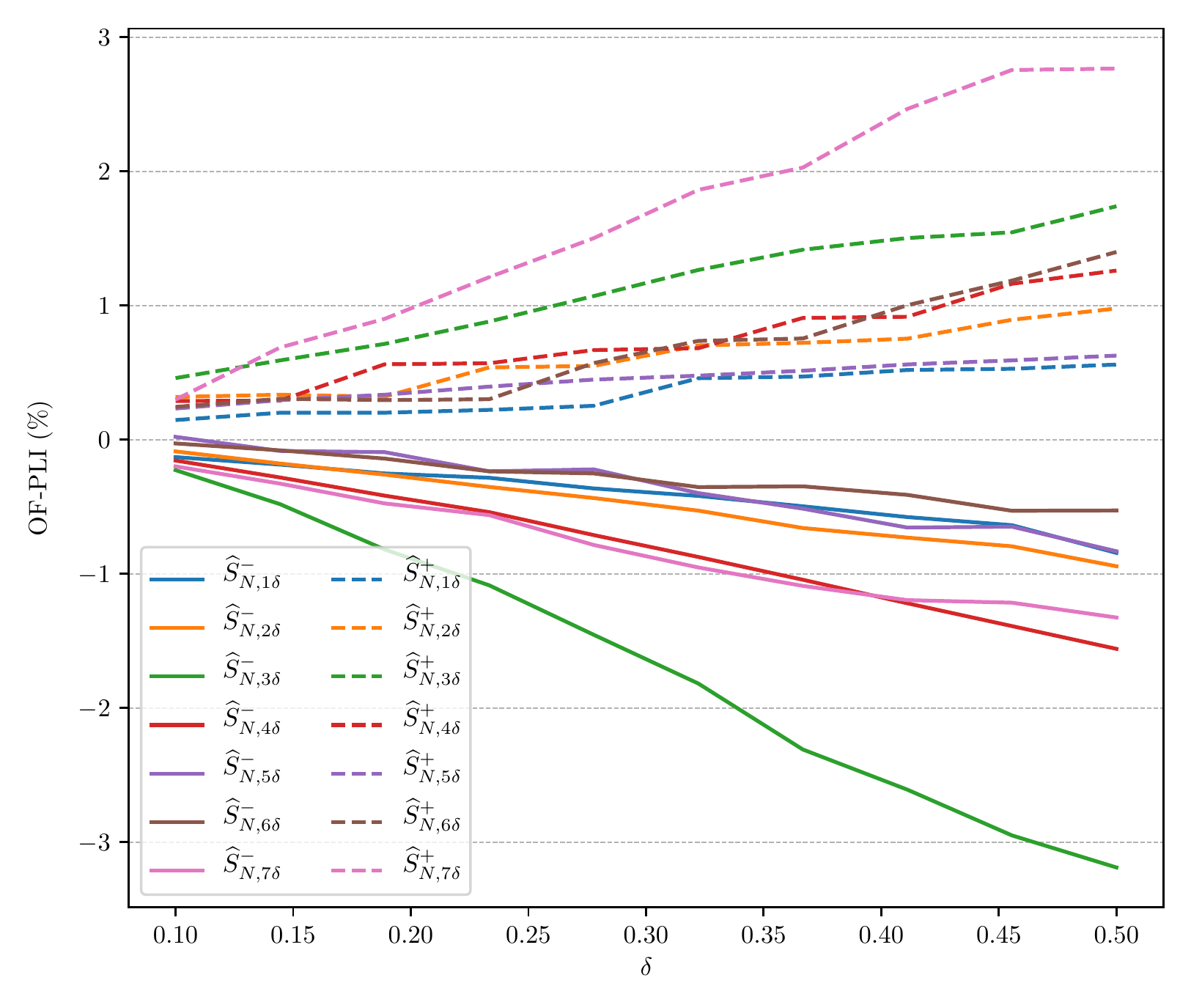}
    \caption{Bootstrap mean of the maximum and minimum of the OF-PLI $S_{i\delta}$ for the CATHARE2 code. Confidence intervals are not shown for the sake of clarity.}
    \label{fig:cathare}
\end{figure}

Studies conducted earlier for the same application \citep{delsue18} lead to similar results concerning the most influential inputs on the PCT's quantile: strong influence of variables $3$ and $4$, weak influence of variables $1$, $2$, and $5$. 
With respect to these previous studies based on the standard space transformation, our information geometry perturbation method evaluates the influence of variable $7$ as being smaller. 
In fact, as it is the only Gaussian distribution, the reverse transformation from the standard space to the physical one operates differently for this input than for the others. 
Overall, according to the values of $\widehat{S}_{N, 3\delta}^{+}$ and $\widehat{S}_{N, 7\delta}^{+}$, the variables $3$ and $7$ appear to be the most influential inputs on the value of the PCT's quantile. 
This was not observed using the standard space transformation, and is probably due to the fact that the standard space approach allows for perturbing only one of the probability distribution parameters (for example the expected value) at a time.
In contrast, our estimator corresponds to the maximal quantile deviation over an entire set of equivalent perturbations. This suggests two major advantages of our newly developed method: it (i) prevents interpretation bias induced by the standard space transformation, and (ii) allows for an exhaustive exploration of density perturbations for a given $\delta$.


\section{Conclusion}

Based on the Fisher distance, we have defined an original methodology for perturbing input probability distributions in the specific case of mutually independent input random variables. 
The Fisher information is an intrinsic characteristic of probability measures and, in particular, does not depend on the specific choice of parametric representation. This fundamental property makes it the proper mathematical tool to compare perturbations on different imperfectly known physical inputs of a computer model, and also on different parameters of the same input distribution. It is even possible to remove all reference to a parametric sub-domain of the set of probability measures on $\mathcal{X}$, given the nonparametric extension of the Fisher distance proposed by \cite{holbrook17}. 
However, the latter is limited by practical issues as it supposes a finite-dimensional representation of the densities, for example by means of projection onto an orthonormal basis of the probability space. This implies truncating the infinite sum of the projections of a given probability on all elements of the basis. This approximation will be poor for probabilities which are very different from those of the chosen basis. This shows that in practice it is not easy to eliminate the reference to a particular parametric model, even in a nonparametric framework.

Nevertheless, based on PLIs, our method  provides useful information on the most influential uncertainties related to the distributions of input variables, so-called ``epistemic uncertainties''. This is in particular crucial not only in making decisions concerning further research programs aiming at gaining better knowledge about these variables, but also to bring strong backing arguments to operators' safety demonstrations.
Indeed, we argue that this methodology is adequate for uncertainty studies with unreliable input distribution identification, or when an improved level of robustness is demanded for the choice of input distributions.
In our target application (nuclear licensing), the aim is not only to exhibit safety margin values for simulated accidents, but also to prove that the methodology as a whole does not induce any risk of underestimating these values. Hence we are not only looking for a worst case assessment method, but also for a more global understanding of how a potential error in an input's distribution affects the output.
In this sense, a practical option to increase conservatism in UQ studies is to replace one or several input distributions by penalized deterministic values, or by penalized versions of the distributions themselves. This nevertheless implies justifying the choice of the variables for which this penalization is done (see, e.g., \citet{largau20}).

Further investigations remain to be completed, due in part to this method increasing the numerical complexity and computational time (by several hours) required compared to the previous method of \citet{lemser15}.
Several Monte Carlo loops are needed to compute the maximal and minimal PLI over Fisher spheres and we have ongoing work on the improvement of the estimation of the maximum and the minimum of the PLI on a Fisher sphere. 
In particular, using a better-adapted programming language, a large computational gain (of hours) could be expected. 
There is a known numerical issue with the reverse importance sampling strategy whereby the likelihood ratio tends to explode, as do the confidence intervals. Moreover, the method consists in sampling trajectories over the Fisher sphere, but one could benefit from a more advanced strategy by directly optimizing the PLI over the sphere via, for instance, gradient descent along this manifold. 
The crucial problem of probabilistic dependency between inputs needs also to be studied in order to extend our framework to the non-independent input case; works in robustness analysis dealing with dependent inputs can be found for instance in \cite{Pesenti}.
Moreover, using a distance based on the Fisher information might be less intuitive for non-statisticians than simply shifting moments.
In order to promote our methodology in engineering applications, it is essential to explicit its link with statistical test theory, thus helping practitioners interpreting the results it provides.
Finally, future works will improve the estimation accuracy of the OF-PLI,  especially for high values of $\delta$ where numerical difficulties occurred, as mentioned in Section 1 of the online supplementary material.




\clearpage

  \begin{center}
    {\LARGE \bf Online Supplementary Materials for the Paper ``An Information Geometry Approach to Robustness Analysis for the Uncertainty Quantification of Computer Codes''}
\end{center}

\section{Computing Fisher spheres: Numerical results}

Hamilton's equations (see Section 3.2 of the main article),
\begin{equation}
    \left \{
\begin{array}{rcccl}
    \dot q & = & \displaystyle \frac{\partial H}{\partial p} & = & I^{-1}(q)p \,,\\
    \dot p & = & - \displaystyle \frac{\partial H}{\partial q} & = & \displaystyle\frac{\partial L(t, q, I^{-1}(q)p)}{\partial q}\,,
\end{array}
\right.
\label{hamileqbis}
\end{equation}
can be solved using numerical approximation methods.
Figure \ref{fig: gaussgeodesics} illustrates our numerical solutions in the Gaussian case, i.e., when $\mathcal{S} = \{\mathcal{N}(\mu, \sigma^2), (\mu, \sigma) \in \mathbb{R} \times \mathbb{R_+^*}\}$. 
We compare the solution given by the explicit Euler algorithm to that given by the Adams-Moulton algorithm.
We recall that in the Gaussian case we have at our disposal an exact analytical expression of the Fisher sphere, as detailed in \cite{Costa}.
The Fisher sphere is centered at $\mathcal{N}(0, 1)$ with radius $\delta = 1$. Notice that there is no observable difference between the two methods in Figure \ref{fig: gaussgeodesics}. Hence, the numerical error is estimated from the variation of the Hamiltonian value that should be theoretically conserved along the geodesics. 
As a consequence, it is possible to quantify the performance of the numerical approximation by computing the value
$\displaystyle \Delta(t) = \frac{H(p(t), q(t)) - H(p(0), q(0))}{H(p(0), q(0))}$ for $t \in [0, 1]$. Here, $\Delta$ represents the relative variation of the Hamiltonian along the path $q$ computed with our numerical methods. 

\begin{figure}[!ht]
            \centering
            \includegraphics[scale = 1]{./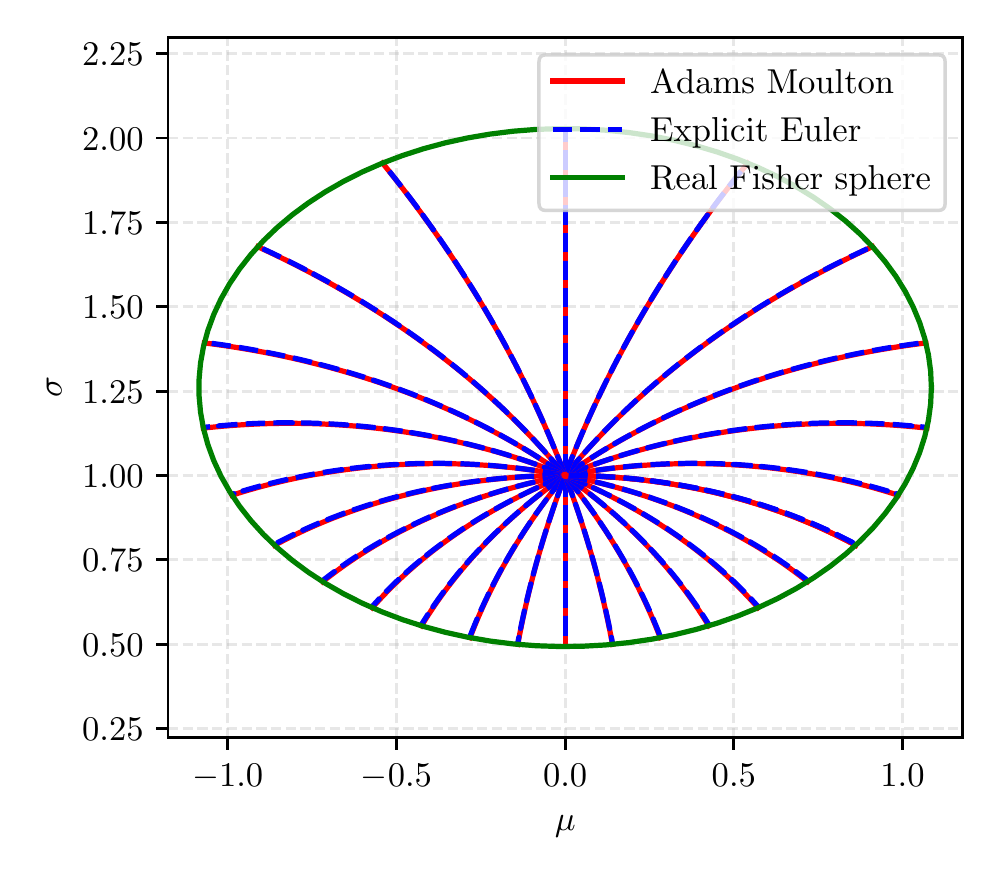}
            \caption{Geodesics in Gaussian information geometry computed with Euler's explicit method and the Adams-Moulton method. The radius $\delta$ is equal to 1.}
            \label{fig: gaussgeodesics}
\end{figure}

Figure \ref{fig: Delta} displays the value of $\Delta(t)$ for $t \in [0, 1]$ for one arbitrary geodesic from Figure \ref{fig: gaussgeodesics}.
The relative error for the Adams-Moulton method is negligible, while the maximum relative error for the explicit Euler scheme is around 0.3\%.
Hence, in the Gaussian case the Adams-Moulton scheme is preferable. 
However, instabilities have been observed in practice.
Symplectic methods  \citep{amanag00,Leimkuhler}, and in particular a symplectic Euler algorithm, could help to alleviate this problem by forcing the Hamiltonian to be constant. This will be the subject of  future work. 
Note also that truncation can lead to other numerical errors when the radius $\delta$ is too large.
Indeed, the normalization constant of some truncated distributions can sometimes become smaller than a computer's machine precision. 

\begin{figure}[!ht]
            \centering
            \includegraphics[scale = 1]{./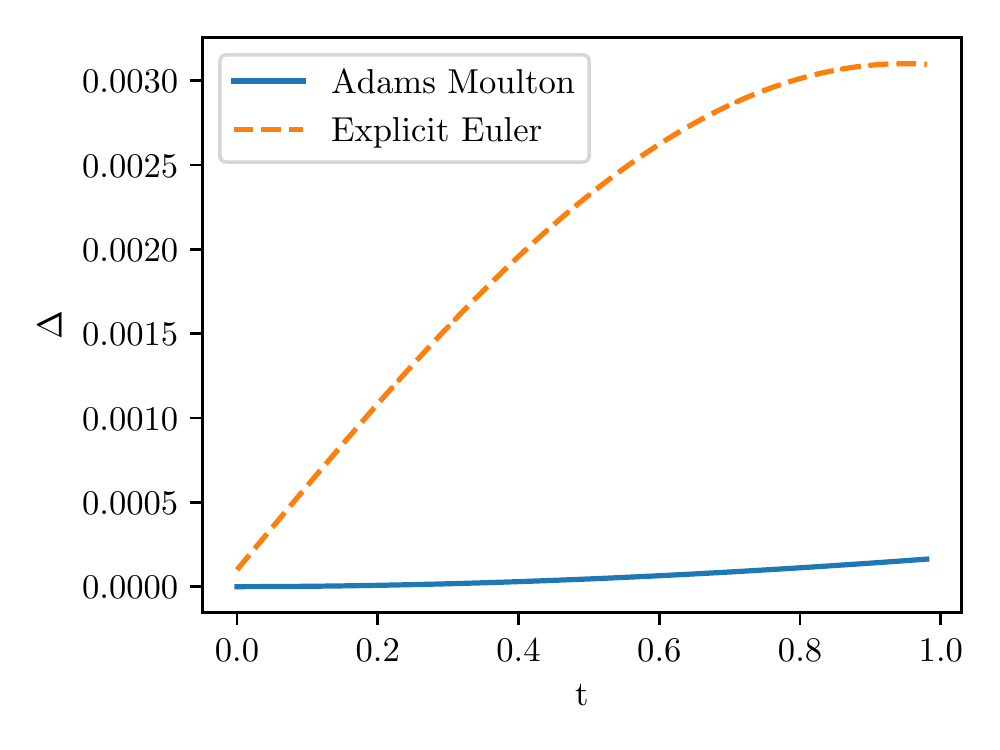}
            \caption{Relative variation of the Hamiltonian $\Delta$ along a geodesic for two different numerical methods.}
            \label{fig: Delta}
\end{figure}

\section{The reverse importance sampling method}

Importance sampling (IS) is a Monte Carlo technique to compute statistical quantities such as $\mathbb{E}[\phi(X)]$ ($\textbf{X} = (X_1,...,X_d)$) with a
sample $\tilde{\mathcal{X}}_N = \{ \tilde{{X}}^{(1)},\ldots,\tilde{{X}}^{(N)} \}$
following a different distribution from that of $X$.
If we call $\pi$ the density of $X$ and $\tilde{\pi}$ the sampling one, we have
$$\mathbb{E}_\pi[\phi(X)]=\int\phi(x)\pi(x)dx = \int\phi(x)\tilde{\pi}(x) \frac{\pi(x)}{\tilde{\pi}(x)} dx  = \mathbb{E}_{\tilde{\pi}}[\phi(X)L(X)] ,$$
where $L={\pi}/{\tilde{\pi}}$ is the likelihood ratio allowing us to convert an integral under the distribution of $X$ into an integral under the sampling one. This leads to estimate of $M=\mathbb{E}[\phi(X)]$ via:
\begin{equation}
\hat{M}_{IS}=\frac{1}{N} \sum_{n=1}^N \phi(\tilde{X}^{(n)})L(\tilde{X}^{(n)}),
\label{eq:is_est}
\end{equation}
with an estimation error given by $\mbox{Var}[\hat{M}_{IS}]=N^{-1} \mbox{Var}_{\tilde{\pi}}[\phi L]$.

The use of an alternative density (called an \textit{instrumental density}) is most often
a variance reduction method \citep{kah53},
as the variance of $\hat{M}$ can be much lower than that of the usual empirical mean for a well chosen $\tilde{\pi}$. Indeed, an optimal variance reduction can be obtained for a sampling distribution $\propto \pi\phi$, which however implies that we know the value of  $\mathbb{E}[\phi]$ we are aiming to estimate.

The principle is similar if we aim to estimate quantiles corresponding to a perturbed
density, though from a sample generated from the initial density
of $X$ \citep{hes96}.
Here the IS is called ``reverse IS'' as we do not change the sampling density to achieve a better estimation of the initial quantity, but instead change the quantity to be estimated with the same
initial sample.
Our purpose is to estimate a perturbed quantile of $Y$, e.g., a quantile of the distribution of $Y_{i\delta}=G(X_1,\ldots,X_{i-1},\tilde{X}_{i},X_{i+1},\ldots,X_d)$, where
the $i$-th input $\tilde{X}_{i}$ is perturbed and has a density of $f_{i\delta}$
instead of the initial one.

Hence the initial density of $X$ can be seen as the instrumental density of a typical IS-Monte Carlo scheme, and the perturbed density is the one under which we must estimate the cdf
$F_{Y,i\delta}$. 
If we denote $\pi(x_1,\ldots,x_d)=\Pi_{i=1}^d f_i(x_i)$ the initial density and
$\tilde{\pi}(x_1,\ldots,x_d)=f_1(x_1)\times...\times f_{i-1}(x_{i-1})\times f_{i\delta}(x_i)\times f_{i+1}(x_{i+1})\times\ldots\times f_d(x_d)$ the perturbed one as regards the
$i$-th input, and recalling that $F_{Y,i\delta}(y)=\mathbb{E}_{\tilde{\pi}}[\mathds{1}_{(G(X)\leq y)}]$, we have the following estimator for the perturbed cdf: 
\begin{equation}
\widehat{F}_{Y,i\delta}^{N}(y) = \frac{1}{N} \sum\limits_{n = 1}^N L_i^{(n)} \mathds{1}_{(y^{(n)} \leq y)}.
\label{eq:is_cdf}
\end{equation}
In this estimator, illustrated in Figure \ref{fig: Empirical_CDF_ini_perturbed_IS}, we recognize that of 
formula~(\ref{eq:is_est})
with $\phi(\cdot)=\mathds{1}_{(G(\mathbf \cdot)\leq y)}$ and $L(x^{(n)}_1,\ldots,x^{(n)}_d)={\pi(x^{(n)}_1,\ldots,x^{(n)}_d)}/{\tilde{\pi}(x^{(n)}_1,\ldots,x^{(n)}_d)}=f_i(x^{(n)}_i)/f_{i\delta}(x^{(n)}_i)=L_i^{(n)}$.

\begin{figure}[!ht]
    \centering
    \includegraphics[scale=0.95]{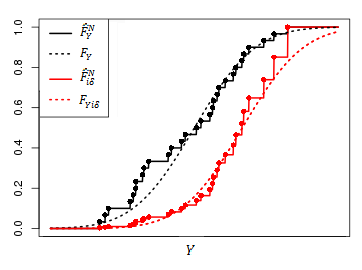}
    \caption{Empirical (solid lines) and theoretical (dotted lines) cumulative distribution functions. The samples (at the points' locations) used to estimate the initial distribution function $F_{Y}$ (black lines) and the perturbed one $F_{Y,i\delta}$ (red lines) are the same. Steps are all equal to $1/N$ for the initial cumulative distribution function, whereas they are weighted by the likelihood ratio in $\widehat{F}_{Y,i\delta}^{N}$.}
    \label{fig: Empirical_CDF_ini_perturbed_IS}
\end{figure}

The estimator $\displaystyle\widehat{q}_{N, i\delta}^{\alpha}$ of the $\alpha$-order quantile of $Y_{i\delta}$ is then obtained by plugging $\widehat{F}_{Y,i\delta}^{N}$ into the definition of
the theoretical quantile $q^{\alpha}_{i\delta}=\inf\{t \in \mathbb{R}, F_{Y,i\delta}(t) \geq \alpha\}$,
which gives:
$$\displaystyle\widehat{q}_{N, i\delta}^{\alpha}=\inf\{t \in \mathbb{R}, \widehat{F}_{Y,i\delta}^{N}(t) \geq \alpha\}.$$

Note that the estimator (\ref{eq:is_cdf}) of $F_{Y,i\delta}$ is normalized by $1/N$, which slightly differs from the one presented here, which is normalized by $1/\sum\limits_{n = 1}^N L_i^{(n)}$,
as well as the one plotted in Figure~\ref{fig: Empirical_CDF_ini_perturbed_IS}.
This guarantees $\widehat{F}_{Y,i\delta}^{N}(y)\rightarrow1$ when $y\rightarrow +\infty$ for finite values of $N$
without changing the asymptotic properties of the quantile estimator \citep{glynn_importance_nodate}.

\section{Theoretical properties of the PLI-quantile estimator}

In this section, we investigate some theoretical aspects of the PLI estimator $\widehat{S}_{N, i\delta}$. 
As it is based on quantile estimators, we first focus on the 
asymptotic properties of the estimator $\left(\widehat{q}^{\alpha}_{N},  \widehat{q}^{\alpha}_{N,i \delta}\right)$.

\begin{theorem}\label{th: quantile}
Suppose that $F_Y$ is differentiable at $q^{\alpha} = F_Y^{-1}(\alpha)$ with $F'_Y(q^{\alpha}) > 0$ and that $F_{Y, i\delta}$ is differentiable at $q^{\alpha}_{i\delta} = F^{-1}_{Y, i\delta}(\alpha)$ with $F'_{Y, i\delta}(q^{\alpha}_{i\delta}) > 0$. We define:
\begin{equation}
\sigma_i^2 = \frac{\alpha(1-\alpha)}{F_Y'(q^{\alpha})^2} \; ,
\end{equation}
\begin{equation*}
\tilde{\sigma}_{i\delta}^2 = \frac{\mathbb{E}\left[\left(\frac{f_{i\delta}(X_i)}{f_i(X_i)}\right)^2(\mathds{1}_{(G(\textbf{X}) \leq q^{\alpha}_{i\delta})} - \alpha)^2\right]}{F'_{Y, i\delta}(q^{\alpha}_{i\delta})^2} \; ,
\end{equation*}
\begin{equation*}
\tilde{\theta}_i = \frac{\mathbb{E}\left[\frac{f_{i\delta}(X_i)}{f_i(X_i)} \mathds{1}_{(G(\textbf{X}) \leq q^{\alpha})}\mathds{1}_{(G(\textbf{X}) \leq q^{\alpha}_{i\delta})}\right] - \alpha\mathbb{E}[\mathds{1}_{(G(\textbf{X}) \leq q^{\alpha}_{i\delta})}]}{F'_Y(q^{\alpha})F'_{Y, i\delta}(q^{\alpha}_{i\delta})} \; .
\end{equation*}
Let us write the covariance matrix $\Sigma$ as:
$$\Sigma = \begin{pmatrix} 
\sigma^2 & \tilde{\theta}_i \\
\tilde{\theta}_i & \tilde{\sigma}_{i\delta}^2
\end{pmatrix},$$ and suppose that it is invertible and $\displaystyle\mathbb{E}\left[\left(\frac{f_{i\delta}(X_i)}{f_i(X_i)}\right)^3\right] < +\infty$. Then
\begin{equation*}
\sqrt{N}\left(\begin{pmatrix}
\widehat{q}^{\alpha}_N \\
\widehat{q}^{\alpha}_{N, i \delta}
\end{pmatrix} - \begin{pmatrix}
q^{\alpha} \\
q^{\alpha}_{i \delta}
\end{pmatrix}\right) \xrightarrow{\mathcal{L}} \mathcal{N}(0, \Sigma) \; .
\end{equation*}
\end{theorem}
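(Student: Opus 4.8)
The plan is to linearize both quantile estimators around their targets (a Bahadur-type, influence-function representation), so that the centered and rescaled pair becomes, up to an $o_P(1)$ term, a normalized sum of i.i.d.\ $\mathbb{R}^2$-valued vectors built from the \emph{same} sample $\mathbf{X}^{(1)},\dots,\mathbf{X}^{(N)}$, and then to apply the multivariate central limit theorem. Throughout, write $\pi:=\prod_{j}f_j$ for the joint sampling density, $\tilde{\pi}$ for $\pi$ with its $i$-th factor $f_i$ replaced by $f_{i\delta}$, $L_i^{(n)}=f_{i\delta}(x_i^{(n)})/f_i(x_i^{(n)})$, and $Y^{(n)}=G(\mathbf{X}^{(n)})$; the change-of-measure identity $\mathbb{E}_\pi[L_i\,g(\mathbf{X})]=\mathbb{E}_{\tilde{\pi}}[g(\mathbf{X})]$, in particular $\mathbb{E}_\pi[L_i]=1$ and $\mathbb{E}_\pi[L_i\mathds{1}_{(Y\le t)}]=F_{Y,i\delta}(t)$, will be used repeatedly.

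\textbf{Step 1 (linearization).} For the empirical quantile, the hypothesis $F_Y'(q^\alpha)>0$ yields the classical Bahadur representation
\begin{equation*}
\widehat{q}^{\alpha}_N - q^{\alpha} = \frac{1}{F_Y'(q^\alpha)}\Bigl(\alpha - \frac1N\sum_{n=1}^N \mathds{1}_{(Y^{(n)}\le q^\alpha)}\Bigr) + o_P(N^{-1/2}).
\end{equation*}
For the reverse-importance-sampling quantile, I would treat $\widehat{q}^{\alpha}_{N,i\delta}$ as the root (via the infimum convention) of the monotone estimating function $\Psi_N(t):=\frac1N\sum_{n=1}^N L_i^{(n)}\bigl(\mathds{1}_{(Y^{(n)}\le t)}-\alpha\bigr)$, the self-normalization by $\sum_n L_i^{(n)}$ being asymptotically harmless because $\frac1N\sum_n L_i^{(n)}\to 1$ a.s.\ (as remarked in the online supplement). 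Since $\mathbb{E}_\pi[\Psi_N(t)]=F_{Y,i\delta}(t)-\alpha$ vanishes at $t=q^\alpha_{i\delta}$ with positive derivative $F'_{Y,i\delta}(q^\alpha_{i\delta})$ there, standard results on $Z$-estimation with a monotone criterion give $\sqrt{N}$-consistency and
\begin{equation*}
\widehat{q}^{\alpha}_{N,i\delta} - q^{\alpha}_{i\delta} = -\frac{1}{F'_{Y,i\delta}(q^\alpha_{i\delta})}\cdot\frac1N\sum_{n=1}^N L_i^{(n)}\bigl(\mathds{1}_{(Y^{(n)}\le q^\alpha_{i\delta})}-\alpha\bigr) + o_P(N^{-1/2}).
\end{equation*}

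\textbf{Step 2 (joint CLT and identification of $\Sigma$).} Setting $Z_1^{(n)}=\bigl(\alpha-\mathds{1}_{(Y^{(n)}\le q^\alpha)}\bigr)/F_Y'(q^\alpha)$ and $Z_2^{(n)}=-L_i^{(n)}\bigl(\mathds{1}_{(Y^{(n)}\le q^\alpha_{i\delta})}-\alpha\bigr)/F'_{Y,i\delta}(q^\alpha_{i\delta})$, Step 1 gives
\begin{equation*}
\sqrt{N}\left(\begin{pmatrix}\widehat{q}^{\alpha}_N\\[2pt]\widehat{q}^{\alpha}_{N,i\delta}\end{pmatrix}-\begin{pmatrix}q^{\alpha}\\[2pt]q^{\alpha}_{i\delta}\end{pmatrix}\right)=\frac{1}{\sqrt{N}}\sum_{n=1}^N\begin{pmatrix}Z_1^{(n)}\\[2pt]Z_2^{(n)}\end{pmatrix}+o_P(1).
\end{equation*}
The vectors $(Z_1^{(n)},Z_2^{(n)})$ are i.i.d.\ with mean zero --- $\mathbb{E}[Z_1^{(n)}]=0$ since $\mathbb{E}_\pi[\mathds{1}_{(Y\le q^\alpha)}]=\alpha$, and $\mathbb{E}[Z_2^{(n)}]=0$ since $\mathbb{E}_\pi[L_i\mathds{1}_{(Y\le q^\alpha_{i\delta})}]=F_{Y,i\delta}(q^\alpha_{i\delta})=\alpha=\alpha\,\mathbb{E}_\pi[L_i]$ --- and a routine expansion of the products followed by the change of measure shows that their covariance matrix equals $\Sigma$, i.e.\ $\mathrm{Var}(Z_1^{(n)})=\sigma_i^2$, $\mathrm{Var}(Z_2^{(n)})=\tilde{\sigma}_{i\delta}^2$, $\mathrm{Cov}(Z_1^{(n)},Z_2^{(n)})=\tilde{\theta}_i$. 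Finiteness of these second moments follows from $\mathbb{E}[L_i^2]<\infty$, itself implied by the assumed $\mathbb{E}[(f_{i\delta}(X_i)/f_i(X_i))^3]<\infty$, so the multivariate Lindeberg--L\'evy CLT applies to the i.i.d.\ sum and yields convergence to $\mathcal{N}(0,\Sigma)$; $\Sigma$ is nondegenerate by hypothesis, and Slutsky's lemma absorbs the $o_P(1)$ remainder, which proves the claim.

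\textbf{Expected main obstacle.} The one genuinely technical point is the control of the Bahadur remainder in Step 1 for $\widehat{q}^{\alpha}_{N,i\delta}$: one must show that the \emph{weighted} empirical process $t\mapsto N^{-1/2}\sum_{n}L_i^{(n)}\bigl(\mathds{1}_{(Y^{(n)}\le t)}-F_{Y,i\delta}(t)\bigr)$ is stochastically equicontinuous on a neighbourhood of $q^\alpha_{i\delta}$, so that evaluating it at the random, $\sqrt{N}$-consistent point $\widehat{q}^{\alpha}_{N,i\delta}$ agrees with evaluating it at $q^\alpha_{i\delta}$ up to $o_P(N^{-1/2})$. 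Because the weight $L_i$ is unbounded, the textbook VC/bracketing bounds for indicator classes do not apply directly; this is precisely where the extra moment assumption $\mathbb{E}[L_i^3]<\infty$ is used, providing an integrable envelope for the weighted function class and the modulus-of-continuity estimate, along the lines of the importance-sampling quantile central limit theorem of \citet{glynn_importance_nodate}. The classical Bahadur expansion for $\widehat{q}^{\alpha}_N$, the moment algebra identifying $\Sigma$, and the concluding Slutsky argument are all standard.
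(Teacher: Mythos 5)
Your proposal is correct and follows essentially the same route as the paper: both reduce the theorem to an asymptotic linearization of the two quantile estimators as normalized i.i.d.\ influence-function sums (yours via a Bahadur/$Z$-estimator representation, the paper's via the pinball-loss $M$-estimator framework and van der Vaart's Lemma 5.10 and Theorem 5.23), followed by the multivariate CLT. The stochastic-equicontinuity issue you flag for the weighted empirical process is exactly what the paper handles through the $L^2$ Lipschitz-envelope hypothesis of Theorem 5.23, using the moment condition on the likelihood ratio.
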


\begin{proof}
We study the consistency and asymptotic normality of specific $M$ and $Z$-estimators in order to establish the proof. 
We suppose the theory around these estimators is known so that details can be kept to the bare minimum.
Further details can be found in Chapters 5.2 and 5.3 of \citet{vaart_asymptotic_2000}. Given a sample $(\mathbf{X}^{(n)})_{n \in (1,\ldots,N)}$ where $\mathbf{X}$ is a $d$-dimensional random vector, we define
\begin{equation}
\begin{array}{rcl}
\eta & = & \displaystyle\frac{\alpha}{1 - \alpha} \ ,\\
m_{\theta}(x) & = & \displaystyle -(G(x) - \theta)\mathds{1}_{(G(x) \leq \theta)} + \eta(G(x) -\theta)\mathds{1}_{(G(x) > \theta)} \ , \\
M_N(\theta_1, \theta_2) & = & \displaystyle\frac{1}{N} \sum\limits_{n=1}^N m_{\theta_1}(\mathbf{X}^{(n)}) + \frac{f_{i\delta}(X_i^{(n)})}{f_i(X_i^{(n)})}m_{\theta_2}(\mathbf{X}^{(n)}) \ , \\
\hat{\mathbf{\theta}}_N & = & \argmax M_N(\theta_1, \theta_2) \ .
\end{array}
\end{equation}
$\hat{\theta}_N$ is defined such that its two components correspond respectively to the estimators $\hat{q}^{\alpha}_N$ and $\hat{q}_{N,i\delta}^{\alpha}$ of the quantile and the perturbed quantile. The map $\theta \mapsto \nabla_{\mathbf{\theta}}M_N(\mathbf{\theta})$ with $\mathbf{\theta} = (\theta_1, \theta_2)^T$ has two non-decreasing components (it is a sum of non-decreasing maps). Now, by definition of $\hat{\theta}_N$ and concavity of $M_n(\theta)$, it holds that $\nabla_{\mathbf{\theta}}M_N(\hat{\mathbf{\theta}}_N) = 0$. Furthermore, we have that  $\displaystyle\nabla_{\mathbf{\theta}}M_N(\mathbf{\theta}) \xrightarrow{P} [(1+\eta)F_Y(\theta_1) - \eta, ((1+\eta)F_{Y, i\delta}(\theta_2) - \bar{L}_N\eta)]^T$ with $\displaystyle\bar{L}_N = \frac{1}{N} \sum\limits_{n=1}^N \frac{f_{i\delta}(X_i^{(n)})}{f_i(X_i^{(n)})}$, and this limit is a strictly non-decreasing function. Therefore, the assumptions of Lemma 5.10 in \cite[p.47]{vaart_asymptotic_2000} are satisfied, proving the consistency of the estimator $\hat{\theta}_N \xrightarrow{P} (q^{\alpha}, q_{i\delta}^{\alpha})^T$.

The asymptotic normality is studied via the map $\displaystyle \bar{m}_{\mathbf{\theta}}(x) \mapsto m_{\theta_1}(x) + \frac{f_{i\delta}(x)}{f_i(x)}m_{\theta_2}(x)$ which is Lipschitz for the variable $\mathbf{\theta}$ with Lipschitz constant $\displaystyle h(x) = \max(1, \eta)\left(1 + \frac{f_{i\delta}(x_i)}{f_i(x_i)}\right)$. The function $h$ belongs in $L^2$ if $\displaystyle \mathbb{E}\left[\left(\frac{f_{i\delta}(X_i)}{f_i(X_i)}\right)^2\right] < +\infty$. The map $\bar{m}_{\mathbf{\theta}}$ is also differentiable at $\displaystyle \mathbf{\theta}_0 = \argmax_{\mathbf{\theta} \in \Theta} \mathbb{E}[\bar{m}_{\mathbf{\theta}}(X)]$ with gradient:

\begin{equation}
\nabla_{\mathbf{\theta}_0} \bar{m}_{\mathbf{\theta}_0}(x) = ((1 + \eta)\mathds{1}_{(G(\mathbf{x}) \leq \theta_1)} - \eta, \: \frac{f_{i\delta}(x_i)}{f_i(x_i)}((1+\eta)\mathds{1}_{(G(\mathbf{x}) \leq \theta_2)} - \eta))^T \ .
\end{equation}                                            
Moreover, the map $\mathbf{\theta} \rightarrow \mathbb{E}[\bar{m}_{\mathbf{\theta}}(\mathbf{X})]$ has the following Hessian:
\begin{equation}
V_{\mathbf{\theta}_0} = \begin{pmatrix}
(1+\eta)F'_Y(q^{\alpha}) & 0 \\
0 & (1+\eta)F'_{Y, i\delta}(q^{\alpha}_{i\delta})
\end{pmatrix} \ ,
\end{equation}                                            
which is non-negative definite symmetric  whenever $F'_Y(q^{\alpha}) > 0$ and $F'_{Y, i\delta}(q^{\alpha}_{i\delta}) > 0$. Hence, Theorem 5.23 in \cite[p.53]{vaart_asymptotic_2000} applies.
This proves the asymptotic normality of the estimator $(\hat{q}^{\alpha}, \hat{q}^{\alpha}_{i\delta})^T$.
\end{proof}

The PLI $S_{i\delta}$ is a straightforward transformation of $\left(q^{\alpha}, q^{\alpha}_{i \delta}\right)^T$. 
To obtain the almost sure convergence of $\widehat{S}_{N, i\delta}$ to $S_{i\delta}$, it suffices to apply the continuous mapping theorem to the function $\displaystyle s(x,y) = \frac{y - x}{x}$. 

\begin{theorem}\label{th: pli}
Under the assumptions of theorem \ref{th: quantile}, we have
\begin{equation}
\sqrt{N}(\widehat{S}_{N,i\delta} - S_{i\delta}) \xrightarrow{\mathcal{L}} \mathcal{N}(0, d_s^T\Sigma d_s), \mbox{ with } d_s = \begin{pmatrix}
-q^{\alpha} / {q^{\alpha}_{i\delta}}^2 \\
1 / q^{\alpha}
\end{pmatrix}.
\end{equation}

\end{theorem}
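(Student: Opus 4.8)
The plan is to deduce the statement from Theorem~\ref{th: quantile} by a single application of the multivariate delta method. Introduce the map $s\colon(x,y)\mapsto(y-x)/x$, so that, directly from the definitions of the estimator and of the index, $\widehat{S}_{N,i\delta}=s(\widehat{q}^{\alpha}_N,\widehat{q}^{\alpha}_{N,i\delta})$ and $S_{i\delta}=s(q^{\alpha},q^{\alpha}_{i\delta})$. The whole argument then amounts to transporting the joint Gaussian limit of the pair of quantile estimators through the smooth function $s$.

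Concretely, I would proceed in three steps. First, quote Theorem~\ref{th: quantile}, which gives
\[
\sqrt{N}\left(\begin{pmatrix}\widehat{q}^{\alpha}_N\\ \widehat{q}^{\alpha}_{N,i\delta}\end{pmatrix}-\begin{pmatrix}q^{\alpha}\\ q^{\alpha}_{i\delta}\end{pmatrix}\right)\xrightarrow{\mathcal{L}}\mathcal{N}(0,\Sigma).
\]
Second, observe that $s$ is continuously differentiable on the open set $\{(x,y):x\neq 0\}$, which contains the limit point $(q^{\alpha},q^{\alpha}_{i\delta})$ --- this is costless, since the relative index $S_{i\delta}$ only makes sense when $q^{\alpha}\neq 0$ --- and compute its gradient at $(q^{\alpha},q^{\alpha}_{i\delta})$ by differentiating $s(x,y)=y\,x^{-1}-1$; this is the column vector $d_s$ displayed in the statement. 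Third, apply the delta method (for instance Theorem~3.1 in \citet{vaart_asymptotic_2000}) to conclude
\[
\sqrt{N}\big(\widehat{S}_{N,i\delta}-S_{i\delta}\big)\xrightarrow{\mathcal{L}}\mathcal{N}\big(0,\,d_s^{T}\Sigma\,d_s\big).
\]
An equivalent route avoids invoking the delta method as a black box: from the almost sure convergence $(\widehat{q}^{\alpha}_N,\widehat{q}^{\alpha}_{N,i\delta})\to(q^{\alpha},q^{\alpha}_{i\delta})$ noted just before the theorem, a first-order Taylor expansion of $s$ around $(q^{\alpha},q^{\alpha}_{i\delta})$ together with Slutsky's lemma delivers the same limit.

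I do not expect any genuine obstacle: the substantive content --- the joint asymptotic normality of the empirical quantile estimator and of its reverse-importance-sampling counterpart --- has already been established in the proof of Theorem~\ref{th: quantile}. The only points deserving a line of justification are that $q^{\alpha}\neq 0$ (which is implicit in the very definition of $S_{i\delta}$ and is what makes $s$ smooth at the limit), and that the limiting variance is genuinely non-degenerate: since $\Sigma$ is assumed positive definite and $d_s$ has nonzero second entry $1/q^{\alpha}$, one has $d_s^{T}\Sigma d_s>0$, so the Gaussian limit is non-trivial.
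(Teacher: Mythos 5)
Your proposal is correct and is exactly the route the paper (implicitly) takes: the paper gives no written proof of this theorem beyond the remark preceding it that $S_{i\delta}$ is the image of $(q^{\alpha},q^{\alpha}_{i\delta})^T$ under $s(x,y)=(y-x)/x$, so the intended argument is precisely the delta method applied to the joint Gaussian limit of Theorem~\ref{th: quantile}. One caveat: you assert that differentiating $s(x,y)=y\,x^{-1}-1$ at $(q^{\alpha},q^{\alpha}_{i\delta})$ yields ``the column vector $d_s$ displayed in the statement,'' but the actual gradient is $\bigl(-q^{\alpha}_{i\delta}/(q^{\alpha})^{2},\; 1/q^{\alpha}\bigr)^{T}$, whereas the statement prints the first entry as $-q^{\alpha}/(q^{\alpha}_{i\delta})^{2}$; this looks like a typo in the paper, and you should have flagged the mismatch rather than claiming your computation reproduces the printed vector. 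With the corrected $d_s$ the argument goes through verbatim, and your closing observations (that $q^{\alpha}\neq 0$ is needed for smoothness of $s$ at the limit, and that $d_s^{T}\Sigma d_s>0$ under the invertibility assumption on $\Sigma$) are sensible additions the paper omits.
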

Notice that the asymptotic variance relies on the initial $\alpha$-quantile and the perturbed quantile, which are precisely what we want to estimate. Hence, Theorem \ref{th: pli} cannot be used for building asymptotic confidence intervals.
However, its convergence properties are important for our method's credibility and acceptance.
In practice, the estimation error can be measured using bootstrapping \citep{Efron1979}.

\section{Applying PLI to an analytic flood risk model}

The model of interest concerns a flooded river simulation, which is especially useful in assessing the risk of submergence of a dike protecting industrial sites near a river. 
To this end, we use a model implementing a simplified version of the 1D hydro-dynamic Saint-Venant equations. This model computes $H$, the maximal annual water level of the river, from four parameters $Q$, $K_s$, $Z_m$ and $Z_v$, which are considered imperfectly known:
\begin{equation}
    H = \left(\frac{Q}{300K_s\sqrt{2.10^{-4}(Z_m - Z_v)}}\right)^{0.6} \ .
    \label{eq: flood}
\end{equation}
The inputs are modeled as random variables with the associated truncated distributions given in Table \ref{tab: table1} \citep{ioolem15}.

\begin{table}[!ht]
    \centering
    \caption{Input variables of the flood model with their associated probability distributions.}
    \begin{tabular}{lrrrr}
    Input n$^\circ$ & Name & Description &  Probability distribution & Truncation  \\
    \hline
    1 & $Q$ &Maximal annual flowrate & Gumbel $\mathcal{G}(1013, 558)$ & $[500, 3000]$ \\
    2 & $K_s$ & Strickler coefficient & Normal $\mathcal{N}(30, 7.5)$ & $[15, +\infty]$ \\
    3 & $Z_v$ & River downstream level & Triangular $\mathcal{T}(50)$ & $[49, 51]$ \\
    4 & $Z_m$ & River upstream level & Triangular $\mathcal{T}(55)$ & $[54, 56]$
    \end{tabular}
    \label{tab: table1}
\end{table}

In global sensitivity analyses, Sobol indices are the most popular sensitivity measures because they are easy to interpret; each Sobol  index represents a share of the output variance, and all indices sum to $1$ under the assumption of independent inputs \citep{SOBOL2001,saltar02,pritar17}. 
We will compare these to the results of our robustness analysis framework in order to illustrate differences.
Note however that these conventional Sobol indices focus on the central part of the distribution (variance of the output).
We then also compute the target Sobol indices \citep{marcha20}, i.e., Sobol indices applied to the indicator function of exceeding a given threshold (chosen here as the $95\%$-quantile of the output).
To compute the first order and total Sobol indices of the inputs of the flood model (Eq.~(\ref{eq: flood})), the asymptotically efficient pick-freeze estimator \citep{pritar17} is used with an elementary Monte Carlo matrix of size $10^6$.
This gives a total cost of $N=6\times10^6$ model runs and a standard deviation of the indices' estimation error smaller than $10^{-3}$.
As shown in Table \ref{tab: sobol}, in the central part of the distribution (conventional Sobol indices),  we observe that the variable $Q$ is clearly more influential than the variable $K_s$, whereas $Z_v$ and $Z_m$ appear to have almost no influence on the output.
From the target Sobol indices, we observe that in the extreme part of the distribution (close to the $95\%$-quantile), $Q$ and $K_s$ have the same total effect 
(due to a strong interaction effect between them, which comes from the fact that threshold exceedence is principally sensitive to the joint values of these two inputs). 

\begin{table}[!ht]
    \centering
    \caption{Sobol index estimates of the flood model inputs.}
    \begin{tabular}{lrrrr}
    Inputs  & $Q$ &  $K_s$ & $Z_v$ & $Z_m$  \\
    \hline
    First-order Sobol indices & 0.713 & 0.254 & 0.006 & 0.006 \\
    Total Sobol indices & 0.731 & 0.271 & 0.008 & 0.008 \\
   First-order target Sobol indices & 0.242 & 0.125 & 0.002 & 0.002 \\
   Total target Sobol indices & 0.867 & 0.739 & 0.119 & 0.121
    \end{tabular}
    \label{tab: sobol}
\end{table}

We compute the OF-PLI (w.r.t. a quantile of order $\alpha=0.95$) for the flood model inputs by increasing the Fisher spheres radii $\delta \in [0, 1.4]$ by steps of $0.1$. The spheres are respectively centered on the distributions shown in Table~\ref{tab: table1}. On each of these spheres, we compute the OF-PLI for $K=100$ different perturbed distributions using a sample of $N=2000$ points distributed according to the initial distribution. The maximal radius $\delta_{max}=1.4$ comes from the stopping criteria explained in Section 4.2 (``Practical implementation'') of the paper. More precisely, the criterion is hit for the first input $Q$ at perturbation level $\delta>1.4$, meaning there are less than $N_\mathcal{Y}=10$ sample points above the maximal perturbed quantile. 

Figure~\ref{fig: gumbel} depicts how the Fisher sphere centered at the variable $Q$ deforms, and how the perturbed densities are spread out around the initial distribution. 
Figures~\ref{fig: distrib cas crue 01} and~\ref{fig: distrib cas crue 14} indicate that the maximal values of the OF-PLI are obtained  for densities allocating the largest probabilities to large input values (blue curves), whereas the minimal values are obtained for densities with the least weighted tails (green curves).
This behavior was predictable here since the river height is an increasing function of river flow (see Eq.~\eqref{eq: flood}), though this type of analysis can in practice provide substantial information in real-world engineering studies. Finally, we also observe that Fisher spheres sometimes have nontrivial shapes, as illustrated in Fig.~\ref{fig: gumbel sphere}. In this plot, we see that Fisher spheres are non-closed curves for radius $0.3\leq\delta$, the parameter's domain being bounded by $0$ for $\beta$ and $\gamma$. Such behaviour is specific to each parametric family of densities and, for instance, is not observed for non-truncated normal distributions. 

\begin{figure}[!ht]
  \begin{subfigure}{\textwidth}
    \centering
    \includegraphics{./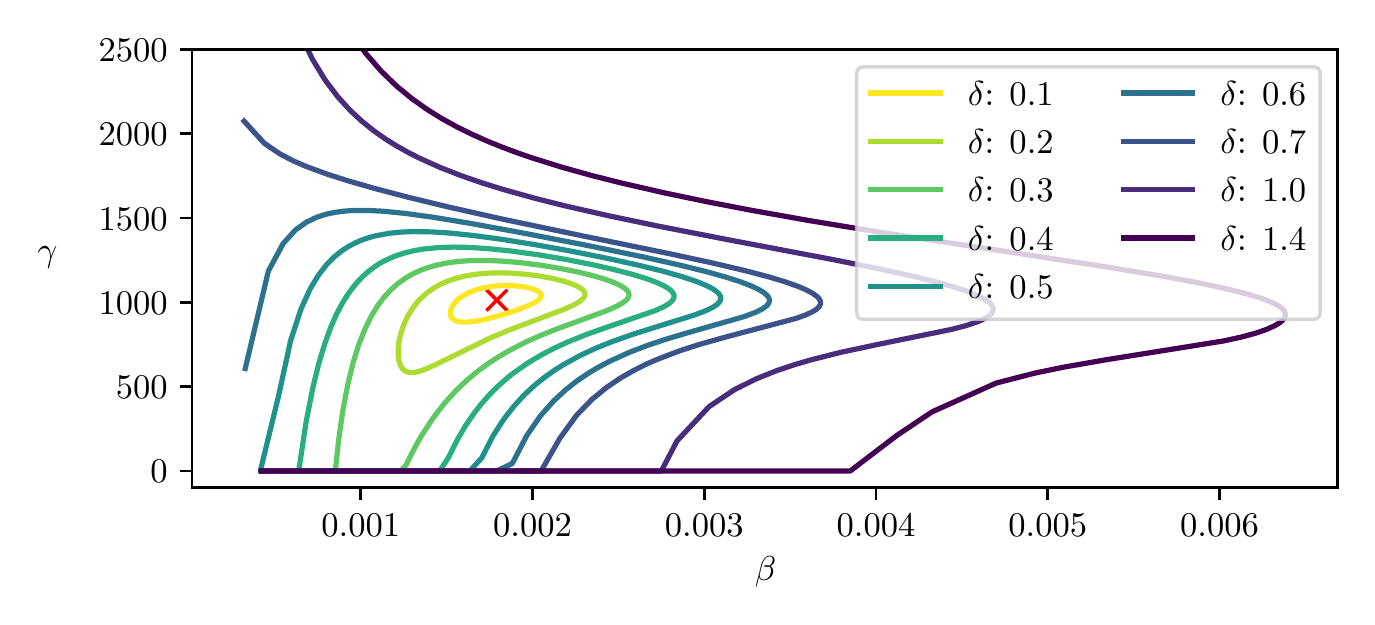}
    \caption{Deformation of the Fisher sphere for increasing radius $\delta$.}
    \label{fig: gumbel sphere}
    \end{subfigure}
  \begin{subfigure}{.48\textwidth}
      \centering
      \includegraphics{./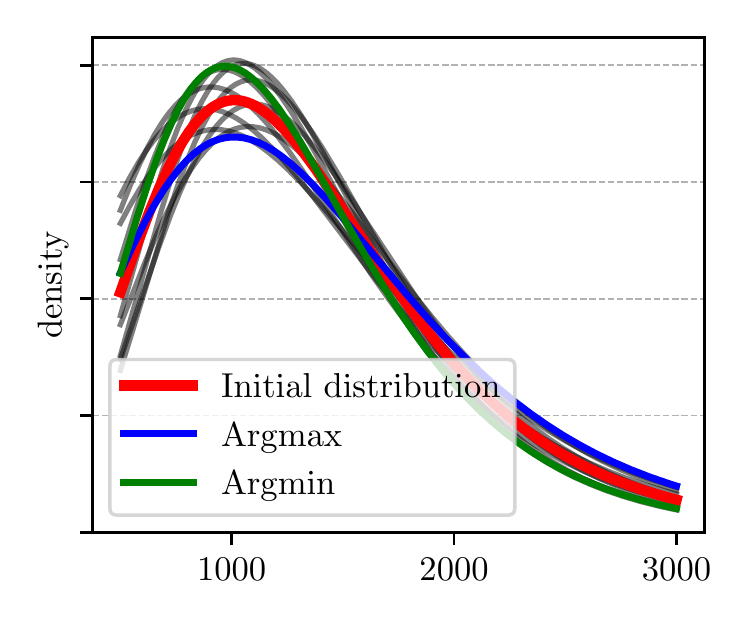}\caption{Densities over the Fisher sphere ($\delta = 0.1$). }
      \label{fig: distrib cas crue 01}
  \end{subfigure}
  \hfill
  \begin{subfigure}{.48\textwidth}
      \centering
      \includegraphics{./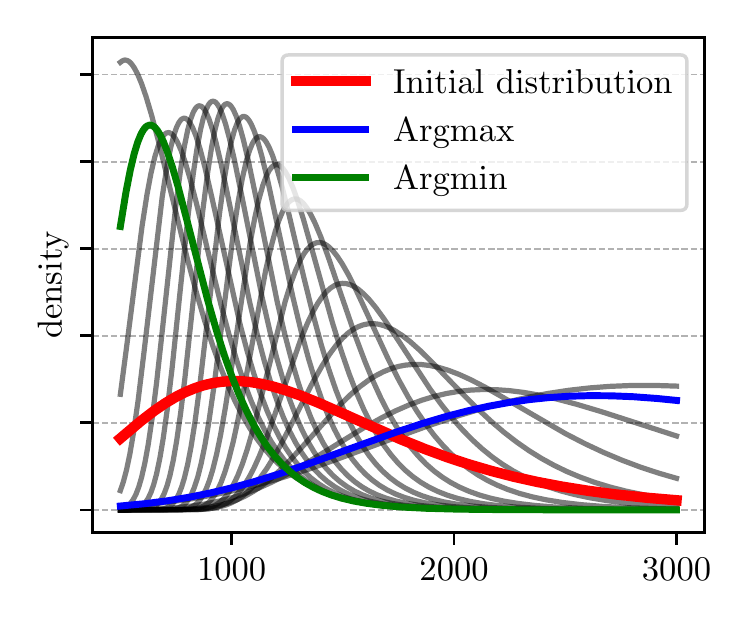}\caption{Densities over the Fisher sphere ($\delta = 1.4$).}
      \label{fig: distrib cas crue 14}
  \end{subfigure}
  \caption{Analysis of the Fisher metric-based perturbation of the truncated Gumbel distribution of the variable $Q$ (see Table \ref{tab: table1}).}
  \label{fig: gumbel}
\end{figure}

The results of the OF-PLI, displayed in Figure $\ref{fig: flood}$, confirm those of the target Sobol indices (see Table~\ref{tab: sobol}): the variables $3$ and $4$, corresponding to $Z_v$ and $Z_m$, are much less influential on the output quantile of level $\alpha=0.95$ than the variables $1$ and $2$, corresponding to $Q$ and $K_s$.
\begin{figure}[!ht]
    \centering
    \includegraphics[scale =1]{./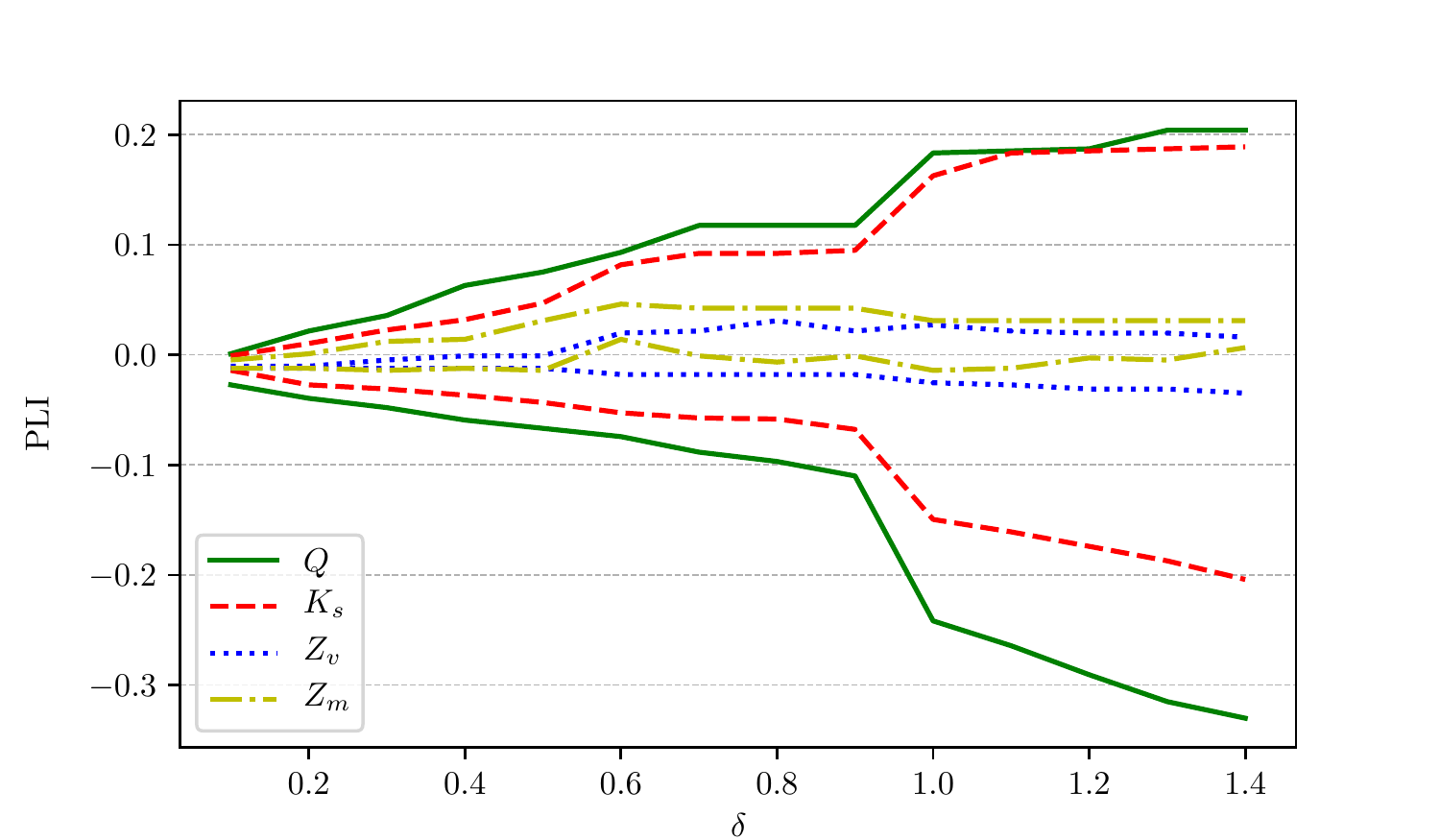}
    \caption{Maximum and minimum estimated value of the OF-PLI $\widehat{S}^+_{N,i\delta}$ and $\widehat{S}^-_{N,i\delta}$ for the different variables of the flood model.}
    \label{fig: flood}
\end{figure}
It turns out that in this case study, perturbations of $Q$ and $K_s$ appear to have comparable effects on the $95\%$-quantile of $H$, although they contribute quite differently to the output variance. 
On the other hand, compared to target Sobol indices, OF-PLI provide more informative results with their evolution as a function of $\delta$.
This clearly shows how a lack of knowledge on an imperfectly known input may turn out to have anything from a high to low impact on the value of a risk measure. 
Note also that the flat segments visible on some curves are due to approximation errors attributed to the low number of sample points $N$ and the high quantile level ($0.95$).
In conclusion, this example confirms the interest of OF-PLI in that it conveys useful complementary information to existing sensitivity indices.



\begin{thebibliography}{60}
\newcommand{\enquote}[1]{``#1''}
\expandafter\ifx\csname natexlab\endcsname\relax\def\natexlab#1{#1}\fi
\expandafter\ifx\csname url\endcsname\relax
  \def\url#1{{\tt #1}}\fi
\expandafter\ifx\csname urlprefix\endcsname\relax\def\urlprefix{URL }\fi

\bibitem[\protect\citeauthoryear{Abramowitz and Stegun}{Abramowitz and
  Stegun}{1974}]{abraste74}
Abramowitz, M. and Stegun, I. (Editors) (1974), {\em Handbook of Mathematical
  Functions\/}, Dover Publications, Inc. New York.

\bibitem[\protect\citeauthoryear{Amari}{Amari}{1985}]{amari}
Amari, S. (1985), {\em Differential-Geometrical Methods in Statistics\/}, New
  York, NY: Springer New York.

\bibitem[\protect\citeauthoryear{Amari and Nagaoka}{Amari and
  Nagaoka}{2000}]{amanag00}
Amari, S. and Nagaoka, H. (2000), {\em Methods of Information Geometry\/},
  Oxford University Press.

\bibitem[\protect\citeauthoryear{Arnold}{Arnold}{1997}]{Arnold}
Arnold, V.~I. (1997), {\em Mathematical Methods of Classical Mechanics
  (Graduate Texts in Mathematics, Vol. 60)\/}, Springer.

\bibitem[\protect\citeauthoryear{Baccou, Zhang, Fillion, Damblin, Petruzzi,
  Mendiz\'abal, Revent\'os, Skorek, Couplet, Iooss, Oh, and Takeda}{Baccou
  et~al.}{2019}]{baczha19}
Baccou, J., Zhang, J., Fillion, P., Damblin, G., Petruzzi, A., Mendiz\'abal,
  R., Revent\'os, F., Skorek, T., Couplet, M., Iooss, B., Oh, D., and Takeda,
  T. (2019), \enquote{Development of good practice guidance for quantification
  of thermal-hydraulic code model input uncertainty,} {\em Nuclear Engineering
  and Design\/}, 354, 110173.

\bibitem[\protect\citeauthoryear{Bucalossi, Petruzzi, Kristof, and
  {D'Auria}}{Bucalossi et~al.}{2010}]{bucpet10}
Bucalossi, A., Petruzzi, A., Kristof, M., and {D'Auria}, F. (2010),
  \enquote{Comparison between best-estimate-plus-uncertainty methods and
  conservative tools for nuclear power plant licensing,} {\em Nuclear
  Technology\/}, 172, 29--47.

\bibitem[\protect\citeauthoryear{Chabridon, Balesdent, Bourinet, Morio, and
  Gayton}{Chabridon et~al.}{2018}]{chabal18}
Chabridon, V., Balesdent, M., Bourinet, J.-M., Morio, J., and Gayton, N.
  (2018), \enquote{Reliability-based sensitivity estimators of rare event
  probability in the presence of distribution parameter uncertainty,} {\em
  Reliability Engineering \& System Safety\/}, 178, 164--178.

\bibitem[\protect\citeauthoryear{Cont, Deguest, and Scandolo}{Cont
  et~al.}{2010}]{cont:hal-00413729}
Cont, R., Deguest, R., and Scandolo, G. (2010), \enquote{{Robustness and
  sensitivity analysis of risk measurement procedures},} {\em {Quantitative
  Finance}\/}, 10, 593 -- 606.

\bibitem[\protect\citeauthoryear{Costa, Santos, and Strapasson}{Costa
  et~al.}{2012}]{Costa}
Costa, S. I.~R., Santos, S.~A., and Strapasson, J.~E. (2012), \enquote{Fisher
  information distance: a geometrical reading,} {\em Discrete Applied
  Mathematics\/}, 197, 59--69.

\bibitem[\protect\citeauthoryear{{de Rocquigny}, Devictor, and Tarantola}{{de
  Rocquigny} et~al.}{2008}]{derdev08}
{de Rocquigny}, E., Devictor, N., and Tarantola, S. (Editors) (2008), {\em
  Uncertainty in industrial practice\/}, Wiley.

\bibitem[\protect\citeauthoryear{Delage, Sueur, and Iooss}{Delage
  et~al.}{2018}]{delsue18}
Delage, T., Sueur, R., and Iooss, B. (2018), \enquote{Robustness analysis of
  epistemic uncertainties propagation studies in {LOCA} assessment
  thermal-hydraulic model,} in {\em Proceedings of ANS Best Estimate Plus
  Uncertainty International Conference (BEPU 2018)\/}, Lucca, Italy.

\bibitem[\protect\citeauthoryear{Efron}{Efron}{1979}]{Efron1979}
Efron, B. (1979), \enquote{Bootstrap Methods: Another Look at the Jackknife,}
  {\em The Annals of Statistics\/}, 7, 1--26.

\bibitem[\protect\citeauthoryear{Eguchi and Copas}{Eguchi and
  Copas}{2006}]{Eguchi}
Eguchi, S. and Copas, J. (2006), \enquote{Interpreting Kullback-Leibler
  Divergence with the Neyman-Pearson Lemma,} {\em Journal of Multivariate
  Analysis\/}, 97, 2034--2040.

\bibitem[\protect\citeauthoryear{Fang, Li, and Sudjianto}{Fang
  et~al.}{2006}]{fanli06}
Fang, K.-T., Li, R., and Sudjianto, A. (2006), {\em Design and modeling for
  computer experiments\/}, Chapman \& Hall/CRC.

\bibitem[\protect\citeauthoryear{Geffraye, Antoni, Farvacque, Kadri, Lavialle,
  Rameau, and Ruby}{Geffraye et~al.}{2011}]{gefant11}
Geffraye, G., Antoni, O., Farvacque, M., Kadri, D., Lavialle, G., Rameau, B.,
  and Ruby, A. (2011), \enquote{{CATHARE2 V2.5\_2: A single version for various
  applications},} {\em Nuclear Engineering and Design\/}, 241, 4456–4463.

\bibitem[\protect\citeauthoryear{Gelfand and Fomin}{Gelfand and
  Fomin}{2012}]{gelfand}
Gelfand, I. and Fomin, S. (2012), {\em Calculus of Variations\/}, Dover
  Publications.

\bibitem[\protect\citeauthoryear{Ghanem, Higdon, and Owhadi}{Ghanem
  et~al.}{2017}]{ghahig17}
Ghanem, R., Higdon, D., and Owhadi, H. (Editors) (2017), {\em Springer Handbook
  on {Uncertainty Quantification}\/}, Springer.

\bibitem[\protect\citeauthoryear{Glynn}{Glynn}{1996}]{glynn_importance_nodate}
Glynn, P.~W. (1996), \enquote{Importance Sampling for {Monte Carlo} Estimation
  of Quantiles,} in {\em Proceedings of Second International Workshop on
  Mathematical Methods in Stochastic Simulation and Experimental Design\/},
  Publishing House of Saint Petersburg University.

\bibitem[\protect\citeauthoryear{Hart and Gremaud}{Hart and
  Gremaud}{2019}]{hart2018robustness}
Hart, J. and Gremaud, P. (2019), \enquote{Robustness of the {S}obol' indices to
  marginal distribution uncertainty,} {\em SIAM/ASA Journal on Uncertainty
  Quantification\/}, 7, 1224--1244.

\bibitem[\protect\citeauthoryear{Hesterberg}{Hesterberg}{1996}]{hes96}
Hesterberg, T. (1996), \enquote{Estimates and confidence intervals for
  importance sampling sensitivity analysis,} {\em Math. Comput. Modelling\/},
  23, 79--85.

\bibitem[\protect\citeauthoryear{{Holbrook}, {Lan}, {Streets}, and
  {Shahbaba}}{{Holbrook} et~al.}{2017}]{holbrook17}
{Holbrook}, A., {Lan}, S., {Streets}, J., and {Shahbaba}, B. (2017),
  \enquote{{The nonparametric Fisher geometry and the chi-square process
  density prior},} {\em arXiv e-prints\/}, arXiv:1707.03117.

\bibitem[\protect\citeauthoryear{Iooss and {Le Gratiet}}{Iooss and {Le
  Gratiet}}{2019}]{iooleg19}
Iooss, B. and {Le Gratiet}, L. (2019), \enquote{Uncertainty and sensitivity
  analysis of functional risk curves based on {G}aussian processes,} {\em
  Reliability Engineering and System Safety\/}, 187, 58--66.

\bibitem[\protect\citeauthoryear{Iooss and Lema\^{\i}tre}{Iooss and
  Lema\^{\i}tre}{2015}]{ioolem15}
Iooss, B. and Lema\^{\i}tre, P. (2015), \enquote{A review on global sensitivity
  analysis methods,} in Meloni, C. and Dellino, G. (editors), {\em Uncertainty
  management in Simulation-Optimization of Complex Systems: Algorithms and
  Applications\/}, Springer.

\bibitem[\protect\citeauthoryear{Iooss and Marrel}{Iooss and
  Marrel}{2019}]{ioomar19}
Iooss, B. and Marrel, A. (2019), \enquote{Advanced methodology for uncertainty
  propagation in computer experiments with large number of inputs,} {\em
  Nuclear Technology\/}, 205, 1588--1606.

\bibitem[\protect\citeauthoryear{Iooss, Verg{\`e}s, and Larget}{Iooss
  et~al.}{2020}]{iooss_robustness_2020}
Iooss, B., Verg{\`e}s, V., and Larget, V. (2020), \enquote{{BEPU} robustness
  analysis via perturbed-law based sensitivity indices,} in {\em Accepted to
  the ANS Best Estimate Plus Uncertainty International Conference (BEPU
  2020)\/}, Giardini Naxos, Italy,
  \urlprefix\url{https://hal.archives-ouvertes.fr/hal-02864053}.

\bibitem[\protect\citeauthoryear{Ishigami and Homma}{Ishigami and
  Homma}{1990}]{Ishigami}
Ishigami, T. and Homma, T. (1990), \enquote{An importance quantification
  technique in uncertainty analysis for computer models,} in {\em [1990]
  Proceedings. First International Symposium on Uncertainty Modeling and
  Analysis\/}, {IEEE} Comput. Soc. Press.

\bibitem[\protect\citeauthoryear{Jin, Chen, and Sudjianto}{Jin
  et~al.}{2005}]{jinche05}
Jin, R., Chen, W., and Sudjianto, A. (2005), \enquote{An efficient algorithm
  for constructing optimal design of computer experiments,} {\em Journal of
  Statistical Planning and Inference\/}, 134, 268--287.

\bibitem[\protect\citeauthoryear{Kahn and Marshall}{Kahn and
  Marshall}{1953}]{kah53}
Kahn, H. and Marshall, A.~W. (1953), \enquote{Methods of Reducing Sample Size
  in Monte Carlo Computations,} {\em Journal of the Operations Research Society
  of America\/}, 1, 263--278.

\bibitem[\protect\citeauthoryear{Kiureghian and Ditlevsen}{Kiureghian and
  Ditlevsen}{2009}]{KIUREGHIAN2009105}
Kiureghian, A.~D. and Ditlevsen, O. (2009), \enquote{Aleatory or epistemic?
  Does it matter?} {\em Structural Safety\/}, 31, 105 -- 112,
  \urlprefix\url{http://www.sciencedirect.com/science/article/pii/S0167473008000556}.
  Risk Acceptance and Risk Communication.

\bibitem[\protect\citeauthoryear{Larget}{Larget}{2019}]{lar19}
Larget, V. (2019), \enquote{How to bring conservatism to a {BEPU} analysis,} in
  {\em NURETH-18\/}, Portland, USA.

\bibitem[\protect\citeauthoryear{Larget and Gautier}{Larget and
  Gautier}{2020}]{largau20}
Larget, V. and Gautier, M. (2020), \enquote{Increasing conservatism in {BEPU IB
  LOCA} safety studies using complementary and industrially cost effective
  statistical tools,} in {\em Accepted to the ANS Best Estimate Plus
  Uncertainty International Conference (BEPU 2020)\/}, Giardini Naxos, Italy.

\bibitem[\protect\citeauthoryear{Le~Brigant}{Le~Brigant}{2019}]{LeBrigant2017}
Le~Brigant, A. (2019), \enquote{A Discrete Framework to Find the Optimal
  Matching Between Manifold-Valued Curves,} {\em Journal of Mathematical
  Imaging and Vision\/}, 61, 40–70.

\bibitem[\protect\citeauthoryear{Leimkuhler and Reich}{Leimkuhler and
  Reich}{2005}]{Leimkuhler}
Leimkuhler, B. and Reich, S. (2005), {\em Simulating Hamiltonian Dynamics\/},
  Cambridge University Press.

\bibitem[\protect\citeauthoryear{Lema\^{\i}tre}{Lema\^{\i}tre}{2014}]{lem14}
Lema\^{\i}tre, P. (2014), {\em Analyse de sensibilit\'e en fiabilit\'e des
  structures - {S}ensitivity analysis in structural reliability\/}, Th\`ese de
  l'Universit\'e Bordeaux I.

\bibitem[\protect\citeauthoryear{Lema\^{\i}tre, Sergienko, Arnaud, Bousquet,
  Gamboa, and Iooss}{Lema\^{\i}tre et~al.}{2015}]{lemser15}
Lema\^{\i}tre, P., Sergienko, E., Arnaud, A., Bousquet, N., Gamboa, F., and
  Iooss, B. (2015), \enquote{Density modification based reliability sensitivity
  analysis,} {\em Journal of Statistical Computation and Simulation\/}, 85,
  1200--1223.

\bibitem[\protect\citeauthoryear{Marrel and Chabridon}{Marrel and
  Chabridon}{2020}]{marcha20}
Marrel, A. and Chabridon, V. (2020), \enquote{Statistical developments for
  target and conditional sensitivity analysis: application on safety studies
  for nuclear reactor,} {\em Preprint,
  https://hal.archives-ouvertes.fr/hal-02541142\/}.

\bibitem[\protect\citeauthoryear{Mazgaj, Vacher, and Carnevali}{Mazgaj
  et~al.}{2016}]{mazvac16}
Mazgaj, P., Vacher, J.-L., and Carnevali, S. (2016), \enquote{Comparison of
  {CATHARE} results with the experimental results of cold leg intermediate
  break {LOCA} obtained during {ROSA-2/LSTF} test 7,} {\em EPJ Nuclear Sciences
  \& Technology\/}, 2.

\bibitem[\protect\citeauthoryear{Meynaoui, Marrel, and Laurent}{Meynaoui
  et~al.}{2019}]{meynaoui}
Meynaoui, A., Marrel, A., and Laurent, B. (2019), \enquote{New statistical
  methodology for second level global sensitivity analysis,}
  \urlprefix\url{https://hal.archives-ouvertes.fr/hal-02019412}. Working paper
  or preprint.

\bibitem[\protect\citeauthoryear{Morio and Balesdent}{Morio and
  Balesdent}{2016}]{morbal16}
Morio, J. and Balesdent, M. (2016), {\em Estimation of rare event probabilities
  in complex aerospace and other systems\/}, Woodhead Publishing.

\bibitem[\protect\citeauthoryear{Mousseau and Williams}{Mousseau and
  Williams}{2017}]{mouwil17}
Mousseau, V. and Williams, B. (2017), \enquote{Uncertainty quantification in a
  regulatory environment,} in Ghanem, R., Higdon, D., and Owhadi, H. (editors),
  {\em Springer Handbook on {Uncertainty Quantification}\/}, Springer.

\bibitem[\protect\citeauthoryear{Newey and McFadden}{Newey and
  McFadden}{1994}]{newey_chapter_1994}
Newey, W.~K. and McFadden, D. (1994), \enquote{Large sample estimation and
  hypothesis testing,} in {\em Handbook of {Econometrics}\/}, volume~4,
  chapter~36, Elsevier, 2111 -- 2245. ISSN: 1573-4412.

\bibitem[\protect\citeauthoryear{Nielsen}{Nielsen}{2013}]{Nielsen2013}
Nielsen, F. (2013), \enquote{Cram{\'e}r-Rao Lower Bound and Information
  Geometry,} in Bhatia, R., Rajan, C.~S., and Singh, A.~I. (editors), {\em
  Connected at Infinity II: A Selection of Mathematics by Indians\/}, Gurgaon:
  Hindustan Book Agency.

\bibitem[\protect\citeauthoryear{Owhadi, Scovel, Sullivan, McKerns, and
  Ortiz}{Owhadi et~al.}{2013}]{owhadi_optimal_2013}
Owhadi, H., Scovel, C., Sullivan, T.~J., McKerns, M., and Ortiz, M. (2013),
  \enquote{Optimal {Uncertainty} {Quantification},} {\em SIAM Review\/}, 55,
  271--345.

\bibitem[\protect\citeauthoryear{Perrin and Defaux}{Perrin and
  Defaux}{2019}]{perdef19}
Perrin, G. and Defaux, G. (2019), \enquote{Efficient estimation of
  reliability-oriented sensitivity indices,} {\em Journal of Scientific
  Computing\/}, 80.

\bibitem[\protect\citeauthoryear{Pesenti, Millossovich, and Tsanakas}{Pesenti
  et~al.}{2019}]{Pesenti}
Pesenti, S., Millossovich, P., and Tsanakas, A. (2019), \enquote{Cascade
  Sensitivity Measures,} {\em Available at SSRN\/}.

\bibitem[\protect\citeauthoryear{Prieur and Tarantola}{Prieur and
  Tarantola}{2017}]{pritar17}
Prieur, C. and Tarantola, S. (2017), \enquote{Variance-Based Sensitivity
  Analysis: {T}heory and Estimation Algorithms,} in Ghanem, R., Higdon, D., and
  Owhadi, H. (editors), {\em Springer Handbook on {Uncertainty
  Quantification}\/}, Springer.

\bibitem[\protect\citeauthoryear{Rao}{Rao}{1945}]{rao}
Rao, C. (1945), \enquote{Information and the accuracy attainable in the
  estimation of statistical parameters,} {\em Bull. Calcutta Math. Soc.\/}, 37.

\bibitem[\protect\citeauthoryear{Saltelli and Tarantola}{Saltelli and
  Tarantola}{2002}]{saltar02}
Saltelli, A. and Tarantola, S. (2002), \enquote{On the relative importance of
  input factors in mathematical models: {S}afety assessment for nuclear waste
  disposal,} {\em Journal of American Statistical Association\/}, 97, 702--709.

\bibitem[\protect\citeauthoryear{{Sanchez-Saez}, S\`anchez, Villanueva, Carlos,
  and Martorell}{{Sanchez-Saez} et~al.}{2018}]{sansan18}
{Sanchez-Saez}, F., S\`anchez, A., Villanueva, J., Carlos, S., and Martorell,
  S. (2018), \enquote{Uncertainty analysis of large break loss of coolant
  accident in a pressurized water reactor using non-parametric methods,} {\em
  Reliability Engineering and System Safety\/}, 174, 19--28.

\bibitem[\protect\citeauthoryear{Smith}{Smith}{2014}]{smi14}
Smith, R. (2014), {\em Uncertainty quantification\/}, SIAM.

\bibitem[\protect\citeauthoryear{Sobol}{Sobol}{1993}]{sob93}
Sobol, I. (1993), \enquote{Sensitivity estimates for non linear mathematical
  models,} {\em Mathematical Modelling and Computational Experiments\/}, 1,
  407--414.

\bibitem[\protect\citeauthoryear{Sobol'}{Sobol'}{2001}]{SOBOL2001}
Sobol', I. (2001), \enquote{Global sensitivity indices for nonlinear
  mathematical models and their Monte Carlo estimates,} {\em Mathematics and
  Computers in Simulation\/}, 55, 271 -- 280. The Second IMACS Seminar on Monte
  Carlo Methods.

\bibitem[\protect\citeauthoryear{Stenger, Gamboa, Keller, and Iooss}{Stenger
  et~al.}{2019}]{stegam19}
Stenger, J., Gamboa, F., Keller, M., and Iooss, B. (2019), \enquote{Optimal
  Uncertainty Quantification of a risk measurement from a thermal-hydraulic
  code using Canonical Moments,} {\em International Journal for Uncertainty
  Quantification\/}, 10, 35--53.

\bibitem[\protect\citeauthoryear{Sternberg}{Sternberg}{1999}]{sternberg1999}
Sternberg, S. (1999), {\em Lectures on Differential Geometry\/}, AMS Chelsea
  Publishing Series, American Mathematical Society.

\bibitem[\protect\citeauthoryear{Stillwell}{Stillwell}{1997}]{Stillwell}
Stillwell, J. (1997), {\em Numbers and Geometry (Undergraduate Texts in
  Mathematics)\/}, Springer.

\bibitem[\protect\citeauthoryear{Stirzaker}{Stirzaker}{2003}]{Stirzaker2003}
Stirzaker, D. (2003), {\em Elementary Probability\/}, Cambridge University
  Press.

\bibitem[\protect\citeauthoryear{Sueur, Bousquet, Iooss, and Bect}{Sueur
  et~al.}{2016}]{suebou16}
Sueur, R., Bousquet, N., Iooss, B., and Bect, J. (2016), \enquote{Perturbed-Law
  based sensitivity indices for sensitivity analysis in structural
  reliability,} in {\em Proceedings of the 8th International Conference on
  Sensitivity Analysis of Model Output (SAMO 2016)\/}, Le Tampon, R\'eunion
  Island, France.

\bibitem[\protect\citeauthoryear{Sueur, Iooss, and Delage}{Sueur
  et~al.}{2017}]{sueioo17}
Sueur, R., Iooss, B., and Delage, T. (2017), \enquote{Sensitivity analysis
  using perturbed-law based indices for quantiles and application to an
  industrial case,} in {\em Proceedings of the 10th International Conference on
  Mathematical Methods in Reliability (MMR 2017)\/}, Grenoble, France.

\bibitem[\protect\citeauthoryear{{Van der Vaart}}{{Van der
  Vaart}}{2000}]{vaart_asymptotic_2000}
{Van der Vaart}, A.~W. (2000), {\em Asymptotic {Statistics}\/}, Cambridge
  University Press.

\bibitem[\protect\citeauthoryear{Villani}{Villani}{2009}]{villani}
Villani, C. (2009), {\em Optimal transport\/}, Springer.

\end{thebibliography}
\end{document}